\documentclass[a4paper,11pt]{article}

\usepackage{amsmath,amssymb,amsthm}
\usepackage{booktabs}
\usepackage{array}
\usepackage{hyperref}
\usepackage[margin=2.5cm]{geometry}

\newtheorem{theorem}{Theorem}[section]
\newtheorem{lemma}[theorem]{Lemma}
\newtheorem{proposition}[theorem]{Proposition}
\newtheorem{corollary}[theorem]{Corollary}
\newtheorem{definition}[theorem]{Definition}
\newtheorem{question}[theorem]{Question}

\theoremstyle{remark}
\newtheorem{remark}[theorem]{Remark}
\newtheorem{example}[theorem]{Example}

\newcommand{\ZZ}{\mathbb{Z}}

\newcommand{\QQ}{\mathbb{Q}}
\newcommand{\RR}{\mathbb{R}}
\newcommand{\Tr}{\operatorname{Tr}}
\newcommand{\Det}{\operatorname{Det}}

\newcommand{\GEN}{\textup{GEN}}
\newcommand{\PROP}{\textup{PROP}}
\newcommand{\KILL}{\textup{KILL}}

\begin{document}

\title{Cascade-free sequences, dispersion index, and state avoidance\\ for stateful digit-wise operations}

\author{Daniel Andreas Moj\\[4pt]
{\normalsize Alte Mainzer Str.\ 151, 55129 Mainz, Germany}\\[2pt]
{\normalsize\texttt{daniel.moj.apo@gmail.com}}}

\date{March 3, 2026}

\maketitle

\begin{abstract}
We show that cascade-free counting from carry theory is a special case of a general transfer matrix construction. For any binary stateful digit-wise operation with \GEN/\PROP/\KILL{} decomposition, the number of cascade-free sequences of length~$L$ depends on only two parameters: the alphabet size~$N$ and the product $\mathfrak{d} = |\GEN|\cdot|\PROP|$. The resulting sequence satisfies $a(L) = N\,a(L{-}1) - \mathfrak{d}\,a(L{-}2)$ and equals a scaled Chebyshev polynomial of the second kind with coupling parameter $x = N/(2\sqrt{\mathfrak{d}}) \ge 1$. We instantiate this for digit-wise addition and doubling in base~$p$. For odd primes the exact relation $a_{\mathrm{carry}}(L) = p^L\,a_{\mathrm{dbl}}(L)$ holds. For $p = 3$ the cascade-free doubling count equals the Fibonacci bisection $F(2L{+}2)$ via $U_L(3/2) = F(2L{+}2)$ (OEIS~A001906); we are not aware of this interpretation in the existing literature.

We analyse the dispersion index $D = \mathrm{Var}(\nu)/\mathrm{E}[\nu]$ of the state count for uniformly distributed inputs. For symmetric chains ($g = k$) the Poisson transition $D_\infty = 1$ occurs at $\mu = 1/3$, corresponding to base~$3$ where the Fibonacci bisection appears. The finite Poisson transition point $\mu^*(L)$ decreases strictly to~$1/3$ with rate $1/(6L) + O(1/L^2)$.

We generalise to state spaces $|S| > 2$ via state avoidance. The restricted transfer matrix has dimension~$s{-}1$; the Chebyshev representation persists for $|S| = 3$.
\end{abstract}

\smallskip
\noindent\textbf{Keywords:} cascade-free sequences, carry propagation, Chebyshev polynomials, transfer matrix, Fibonacci bisection, dispersion index, stateful arithmetic

\smallskip
\noindent\textbf{MSC 2020:} 11A63, 05A15

\section{Introduction}\label{sec:intro}

Adding two numbers in a positional number system is inherently
sequential: at each digit position one must know the incoming carry
before the outgoing carry can be determined.  Whether a carry
propagates depends on the digits at the current position and on the
state inherited from the previous one.  This state dependence is
responsible for carry-propagation delays in hardware adders, for the
correlation structure analysed by Holte~\cite{holte} and
Diaconis--Fulman~\cite{diaconis,diaconis-fulman-2012}, and for the connection to the
$p$-adic valuation established by Kummer~\cite{kummer}.  See also Nakano and Sadahiro~\cite{nakano} for the distributional structure of carry propagation.

We call an arithmetic operation \emph{stateful} when its output depends
not only on the current digits but on a state trajectory
$\sigma_0,\sigma_1,\ldots,\sigma_L$ inherited from previous positions.
The present paper isolates the combinatorial core of stateful
digit-wise arithmetic---the \emph{cascade-free counting
problem}---and develops it in full generality.

\subsection{The counting problem}\label{sec:intro-counting}

Fix a finite alphabet~$X$ with $|X|=N$ and a binary state space
$S=\{0,1\}$.  Each symbol $x\in X$ acts on~$S$ by one of three
transition types: it can force the state to~$1$ (\GEN), preserve
the current state (\PROP), or reset the state to~$0$
(\KILL).  A sequence $x_1,\ldots,x_L\in X^L$ is
\emph{cascade-free} if no \PROP{} symbol ever receives the
state~$1$---equivalently, if no \GEN{} symbol is immediately
followed by a \PROP{} symbol.

The cascade-free condition is a forbidden-adjacency constraint, and
the transfer matrix method computes the number of admissible sequences
in the standard way.  Since the transfer matrix is $2\times 2$, the
count satisfies a second-order linear recurrence.  At this level the
problem is an instance of subshift enumeration
\cite{lind-marcus} and forbidden-pattern counting
\cite{goulden-jackson}; the enumerative machinery is entirely classical.
What has apparently not been noticed is that trace and determinant of the
transfer matrix equal~$N$ and $\mathfrak{d} = |\GEN|\cdot|\PROP|$,
so the cascade-free count depends on just two arithmetically meaningful
parameters.  Different operations sharing the same~$N$
and~$\mathfrak{d}$ produce the same count, and a single coupling
parameter $x = N/(2\sqrt{\mathfrak{d}})$ governs the behaviour.

\subsection{Main results}\label{sec:intro-results}

The $2\times 2$~transfer matrix that governs cascade-free counting has
trace~$N$ and determinant~$\mathfrak{d}=|\GEN|\cdot|\PROP|$, and
these two invariants carry direct arithmetic meaning.  The cascade-free
count depends on the individual sizes of \GEN, \PROP, \KILL{} only
through~$N$ and~$\mathfrak{d}$.  The following is proved in
Sections~\ref{sec:counting} and~\ref{sec:chebyshev}.

\begin{theorem}[Universality and Chebyshev representation; Theorems~\ref{thm:main} and~\ref{thm:chebyshev}]\label{thm:intro-main}
Let $|X| = N$ and $\mathfrak{d} = |\GEN|\cdot|\PROP|$.  Then:

\emph{(a)} The cascade-free count satisfies $a(L) = N\,a(L{-}1) - \mathfrak{d}\,a(L{-}2)$ with $a(0) = 1$, $a(1) = N$.

\emph{(b)} Two operations with the same values of~$N$ and~$\mathfrak{d}$ produce identical cascade-free sequences.

\emph{(c)} For $\mathfrak{d} > 0$, the count equals $a(L) = (\sqrt{\mathfrak{d}}\,)^L\,U_L(x)$, where $U_L$ is the Chebyshev polynomial of the second kind and $x = N/(2\sqrt{\mathfrak{d}}) \ge 1$.
\end{theorem}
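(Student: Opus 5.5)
I will prove the statement with a two-state transfer matrix indexed by the state after reading each symbol. Write $g=|\GEN|$, $r=|\PROP|$, $k=|\KILL|$, so $N=g+r+k$. I assume the initial state is $\sigma_0=0$. Count cascade-free words of length $L$ by their final state: let $u_0(L)$ be the number ending in state $0$ and $u_1(L)$ the number ending in state $1$.

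From state $0$, every symbol is admissible: a \GEN{} symbol moves to $1$, while \PROP{} and \KILL{} symbols stay at $0$. From state $1$, a \PROP{} symbol is forbidden, since it would receive state $1$; \GEN{} keeps the state at $1$ and \KILL{} resets it to $0$. This gives
\[
\begin{pmatrix}u_0(L)\\ u_1(L)\end{pmatrix}=T\begin{pmatrix}u_0(L-1)\\ u_1(L-1)\end{pmatrix},\qquad
T=\begin{pmatrix} r+k & k\\ g & g\end{pmatrix}.
\]
Its trace is $r+k+g=N$, and its determinant is $(r+k)g-kg=gr=\mathfrak{d}$. The count is $a(L)=(1,1)\,T^L e_0$. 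In particular $a(0)=1$ and $a(1)=(r+k)+g=N$.

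By Cayley--Hamilton, $T^2=N\,T-\mathfrak{d}\,I$. Multiplying on the left by $(1,1)T^{L-2}$ and on the right by $e_0$ gives $a(L)=N\,a(L-1)-\mathfrak{d}\,a(L-2)$ for $L\ge 2$, which is (a). Part (b) follows at once: the recurrence and both initial values depend only on $N$ and $\mathfrak{d}$, so induction on $L$ shows the two sequences agree.

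For (c), set $b(L)=a(L)/(\sqrt{\mathfrak{d}})^L$. Dividing the recurrence by $(\sqrt{\mathfrak{d}})^L$ gives
\[
b(L)=2x\,b(L-1)-b(L-2),\qquad b(0)=1,\quad b(1)=N/\sqrt{\mathfrak{d}}=2x.
\]
These are exactly the recurrence and initial values of $U_L(x)$, namely $U_0=1$, $U_1=2x$ and $U_L=2xU_{L-1}-U_{L-2}$. Hence $b(L)=U_L(x)$ for all $L$.

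Finally, $x\ge 1$ is equivalent to $N^2\ge 4\mathfrak{d}$. This holds because $N\ge g+r$ and, by AM--GM, $(g+r)^2\ge 4gr$.

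The only real obstacle is setting the conventions correctly: the initial state $\sigma_0=0$, and reading the cascade-free condition as "no \PROP{} symbol in state $1$". These must match the later definitions in Section~\ref{sec:counting}. Under that reading, state $1$ arises exactly after a \GEN{} symbol, so the condition coincides with the paper's adjacency formulation "no \GEN{} immediately followed by \PROP". If the paper instead counts over both initial states, the same argument applies with the start vector $e_0$ replaced by $e_0+e_1$. In that case I would recheck the initial values, since $a(1)=N$ must still come out.
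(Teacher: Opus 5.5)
Your proof is correct and follows the paper's route: a $2\times 2$ transfer matrix with trace $N$ and determinant $\mathfrak{d}=gt$, Cayley--Hamilton for the recurrence, rescaling by $(\sqrt{\mathfrak{d}})^L$ to match the Chebyshev recurrence, and AM--GM for $x\ge 1$. Your matrix is the transpose of the paper's $T$ (you use the column-vector convention), and your formula $a(L)=(1,1)\,T^L e_0$ with $\sigma_0=0$ replaces the paper's $v_0^{\mathsf{T}}T^{L-1}\mathbf{1}$; this has the small advantage of covering $L=0$ and $L=1$ uniformly.
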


This universality has a concrete consequence for arithmetic:
digit-wise addition and digit-wise doubling in the same odd-prime
base~$p$ share the same coupling parameter, and their cascade-free
counts are related by an exact scaling law.

\begin{theorem}[Scaling law and Fibonacci specialisation; Theorems~\ref{thm:scaling} and~\ref{thm:fibonacci}]\label{thm:intro-scaling}
\emph{(a)} For all odd primes~$p$ and all $L \ge 0$,
$a_{\mathrm{carry}}(L) = p^L \cdot a_{\mathrm{dbl}}(L)$.

\emph{(b)} For $p = 3$, the cascade-free doubling count equals the Fibonacci bisection: $a_{\mathrm{dbl}}(L) = F(2L{+}2) = U_L(3/2)$.
\end{theorem}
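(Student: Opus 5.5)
We reduce both parts to Theorem~\ref{thm:intro-main}. By part~(a) of that theorem, any cascade-free count is fixed by the pair $(N,\mathfrak{d})$ through the recurrence $a(L)=N\,a(L-1)-\mathfrak{d}\,a(L-2)$ with $a(0)=1$, $a(1)=N$. So the work is to identify the \GEN/\PROP/\KILL{} decomposition for each operation and compute $N$ and $\mathfrak{d}$, then compare the recurrences.

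\emph{Step 1: parameters.} For digit-wise addition in base~$p$ the alphabet is the set of digit pairs $(u,v)$, so $N=p^2$. A pair generates a carry iff $u+v\ge p$, propagates iff $u+v=p-1$, and kills iff $u+v\le p-2$. Counting gives $|\PROP|=p$ and $|\GEN|=p(p-1)/2$, hence
\[
\mathfrak{d}_{\mathrm{carry}}=\frac{p^2(p-1)}{2}.
\]
For doubling, the alphabet is the single digit~$u$, so $N=p$. Here $2u+c\ge p$ with $c\in\{0,1\}$. For odd~$p$ the digit $u=(p-1)/2$ is the unique \PROP{} symbol, since $2u=p-1$. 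The digits $u>(p-1)/2$ are \GEN, which gives $(p-1)/2$ of them, and the rest are \KILL. Hence
\[
\mathfrak{d}_{\mathrm{dbl}}=\frac{p-1}{2}.
\]
The oddness of~$p$ is used exactly here, because for even~$p$ no digit satisfies $2u=p-1$.

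\emph{Step 2: scaling.} Put $b(L)=p^L a_{\mathrm{dbl}}(L)$. Multiplying the doubling recurrence by $p^L$ gives
\[
b(L)=p^2\,b(L-1)-p^2\cdot\frac{p-1}{2}\,b(L-2),
\]
with $b(0)=1$ and $b(1)=p^2$. This is exactly the carry recurrence with the carry initial data, so induction on~$L$ (or part~(b) of Theorem~\ref{thm:intro-main} applied after the rescaling) gives part~(a). Equivalently, both coupling parameters equal $x=\sqrt{p/(2(p-1))}$, and the Chebyshev form in part~(c) makes the factor $p^L$ visible directly.

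\emph{Step 3: Fibonacci.} For $p=3$ we get $N=3$ and $\mathfrak{d}=1$, so $a_{\mathrm{dbl}}(L)=3a_{\mathrm{dbl}}(L-1)-a_{\mathrm{dbl}}(L-2)$ with initial values $1$ and $3$. By part~(c), $a_{\mathrm{dbl}}(L)=U_L(3/2)$. The bisection $c(L)=F(2L+2)$ satisfies the same recurrence, since $F(n+2)=3F(n)-F(n-2)$ follows from applying $F(n+1)=F(n)+F(n-1)$ twice. Its initial values are $F(2)=1$ and $F(4)=3$, so the two sequences agree and part~(b) follows.

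The main obstacle is the classification in Step~1: one has to check that the three classes are well defined as state maps, independent of the incoming carry. For addition, if $u+v\ge p$ then $u+v+c\ge p$, and if $u+v\le p-2$ then $u+v+c\le p-1$, so both \GEN{} and \KILL{} are independent of~$c$. The analogous check for doubling is the same. After that, everything is a recurrence comparison.
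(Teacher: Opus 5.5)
Your proof is correct and follows the paper's route: for (a) you rescale $b(L)=p^L a_{\mathrm{dbl}}(L)$ to recover the carry recurrence with matching initial data, and for (b) you compare the $p=3$ doubling recurrence $a(L)=3a(L-1)-a(L-2)$ with that of $F(2L+2)$, with $U_L(3/2)$ coming from the Chebyshev representation at $\mathfrak{d}=1$. One slip in your side remark: the common coupling parameter is $x=p/\sqrt{2(p-1)}$, not $\sqrt{p/(2(p-1))}$; your formula is below $1$ and gives $\sqrt{3}/2$ instead of $3/2$ at $p=3$, though the main argument does not depend on it.
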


OEIS~A001906~\cite{oeis-A001906} lists many interpretations of the Fibonacci bisection;
we are not aware of one arising from carry propagation or stateful
digit-wise operations.

The coupling parameter $x = 3/2$ appears again in the context of dispersion analysis.
For symmetric chains ($g = k$), the dispersion index of the state
count $\nu=\sum\sigma_k$ equals~$1$ (Poisson behaviour) precisely
at $\mu=t/N=1/3$, which corresponds to the same Chebyshev point
$x = 3/2$ (Theorem~\ref{thm:poisson-coupling}).  The finite Poisson
transition point $\mu^*(L)$ approaches~$1/3$ strictly from above,
with rate $1/(6L) + O(1/L^2)$ (Theorem~\ref{thm:monotonicity}).
That the Fibonacci point and the Poisson point coincide is not a
numerical accident but follows from the algebra of the coupling
parameter; whether it reflects something deeper remains open
(Question~\ref{q:poisson-fibonacci}).

\subsection{Related work}\label{sec:intro-related}

Carries as a stochastic process go back to
Holte~\cite{holte}, who modelled base-$p$ addition as a Markov chain on $\{0,1\}$. Diaconis and Fulman~\cite{diaconis,diaconis-fulman-2012} uncovered a striking connection between carry chains and card shuffling via the descent algebra;
Kummer's classical theorem~\cite{kummer} provides the $p$-adic side of the story by relating the $p$-adic valuation of binomial coefficients to carry counts.  Distributional aspects from a different angle are treated by Nakano and Sadahiro~\cite{nakano}.

For forbidden-pattern counting via transfer matrices we refer to Lind and Marcus~\cite{lind-marcus} (symbolic dynamics viewpoint) and
Goulden and Jackson~\cite{goulden-jackson} (cluster decomposition); Stanley~\cite{stanley} gives a textbook account.  The Chebyshev polynomials that arise from $2\times 2$ transfer matrices are treated in Mason and
Handscomb~\cite{mason} and Rivlin~\cite{rivlin}.

The dispersion index---called Fano factor in physics~\cite{fano} and index of dispersion in statistics~\cite{cox-lewis}---quantifies departure from Poisson behaviour. Variance formulas for Markov-dependent binary sequences in terms of the second eigenvalue are classical~\cite{kemeny-snell,billingsley}. What seems to be new here is the link to the carry-theoretic coupling parameter and the monotonicity proof for the finite Poisson transition point.

We do not treat the algebraic aspects connecting carry sequences to symmetric function theory~\cite{diaconis}, nor the deeper $p$-adic analysis beyond the basic Kummer interpretation.

\subsection{Outline}

Section~\ref{sec:prelim} fixes notation and recalls the needed background. Sections~\ref{sec:counting} and~\ref{sec:chebyshev} develop the counting framework: the universality theorem and the Chebyshev representation.  The arithmetic instances---carry propagation, doubling, and the Fibonacci link---occupy Section~\ref{sec:instances}.  The Markov chain and variance analysis follow in Section~\ref{sec:markov}, and the Poisson transition with its monotonicity proof in Section~\ref{sec:poisson}.  Section~\ref{sec:generalisation} extends the theory to $|S| > 2$.

\subsection{Notation}\label{sec:notation}

Table~\ref{tab:notation} collects the principal notation used throughout the paper.

\begin{table}[ht]
\centering
\caption{Principal notation.}
\label{tab:notation}
\begin{tabular}{cl}
\toprule
Symbol & Meaning \\
\midrule
$X,\; N = |X|$ & Finite alphabet and its size \\
$S = \{0,1\}$ & Binary state space \\
$g,\; t,\; k$ & Sizes of the \GEN, \PROP, \KILL{} classes ($g + t + k = N$) \\
$\mathfrak{d} = g\cdot t$ & Determinant of the transfer matrix \\
$T$ & $2 \times 2$ transfer matrix for cascade-free counting \\
$a(L)$ & Number of cascade-free sequences of length~$L$ \\
$x = N/(2\sqrt{\mathfrak{d}})$ & Coupling parameter \\
$U_L(x)$ & Chebyshev polynomial of the second kind \\
$M$ & Markov transition matrix for the state chain \\
$\mu = t/N$ & Second eigenvalue of~$M$ \\
$\pi_0,\, \pi_1$ & Stationary distribution of the state chain \\
$\nu = \sum_{k=1}^L \sigma_k$ & Total state count \\
$D = \mathrm{Var}(\nu)/\mathrm{E}[\nu]$ & Dispersion index \\
$D_\infty$ & Asymptotic dispersion index ($L \to \infty$) \\
$\mu^*(L)$ & Finite Poisson transition point ($D = 1$) \\
$\widetilde{T}$ & Restricted transfer matrix for state avoidance ($|S| > 2$) \\
\bottomrule
\end{tabular}
\end{table}

\section{Preliminaries}\label{sec:prelim}

We collect the definitions and standard results needed in the sequel.

\subsection{Chebyshev polynomials of the second kind}\label{sec:prelim-cheb}

The Chebyshev polynomials of the second kind $U_n\colon \RR \to \RR$ are defined by the recurrence
\begin{equation}\label{eq:cheb-recurrence}
U_n(x) = 2x\,U_{n-1}(x) - U_{n-2}(x), \quad U_0(x) = 1,\; U_1(x) = 2x.
\end{equation}
For $|x| > 1$, the closed form is
\begin{equation}\label{eq:cheb-closed}
U_n(x) = \frac{\alpha^{n+1} - \beta^{n+1}}{\alpha - \beta}, \qquad \alpha = x + \sqrt{x^2 - 1},\quad \beta = x - \sqrt{x^2 - 1},
\end{equation}
where $\alpha\beta = 1$ and $\alpha + \beta = 2x$. At $x = 1$ the polynomial evaluates to $U_n(1) = n + 1$. The generating function is
\begin{equation}\label{eq:cheb-gf}
\sum_{n \ge 0} U_n(x)\,z^n = \frac{1}{1 - 2xz + z^2}.
\end{equation}
Standard references for these properties include Mason and Handscomb~\cite{mason} and Rivlin~\cite{rivlin}.

\subsection{Transfer matrices and forbidden adjacencies}\label{sec:prelim-tm}

Let $\Sigma$ be a finite alphabet and $\mathcal{F} \subset \Sigma \times \Sigma$ a set of forbidden pairs. A sequence $x_1,\ldots,x_L \in \Sigma^L$ is \emph{admissible} if $(x_k, x_{k+1}) \notin \mathcal{F}$ for all $1 \le k \le L{-}1$. The number of admissible sequences of length~$L$ is determined by a transfer matrix $A$ with $A_{ij} = 1$ if $(i,j) \notin \mathcal{F}$ and $A_{ij} = 0$ otherwise: the count equals $v_0^{\mathsf{T}} A^{L-1}\, \mathbf{1}$ for a suitable initial vector $v_0$.

When the forbidden structure can be encoded by a matrix with block structure, the effective transfer matrix has smaller dimension. In our setting, the state trajectory $\sigma_0,\ldots,\sigma_L$ is binary, and the forbidden condition (no \GEN{} followed by \PROP) gives rise to a $2 \times 2$ transfer matrix. The admissible sequence count then satisfies a linear recurrence of order~$2$ with coefficients determined by the trace and determinant of this matrix. This is a standard construction; see Stanley~\cite{stanley}, Chapter~4, and Lind and Marcus~\cite{lind-marcus}, Chapter~4.

\subsection{Dispersion index}\label{sec:prelim-disp}

For a non-negative integer-valued random variable~$\nu$ with $\mathrm{E}[\nu] > 0$, the \emph{dispersion index} (or index of dispersion) is
\[
D = \frac{\mathrm{Var}(\nu)}{\mathrm{E}[\nu]}.
\]
When $\nu$ follows a Poisson distribution, $D = 1$. Values $D > 1$ indicate overdispersion (clustering), while $D < 1$ indicates underdispersion (more regular than Poisson). For sums of correlated Bernoulli variables arising from a Markov chain, the dispersion index depends on the autocorrelation structure and can be computed explicitly in terms of the second eigenvalue of the transition matrix; see Cox and Lewis~\cite{cox-lewis} and Fano~\cite{fano}.

\section{The counting framework}\label{sec:counting}

\subsection{Stateful digit-wise operations}

Let $p \ge 2$ be a base and $S = \{0,1\}$ a binary state space. A \emph{stateful digit-wise operation} processes a sequence of input symbols $x_1, x_2, \ldots, x_L$ from a finite alphabet~$X$ ($|X| = N$) and produces a state trajectory $\sigma_0, \sigma_1, \ldots, \sigma_L$ with $\sigma_0 = 0$.

At each position~$k$, the symbol~$x_k$ together with~$\sigma_{k-1}$ determines the successor state~$\sigma_k$. The state transition function $T_x\colon S \to S$ assigns to each symbol~$x$ a map $\{0,1\} \to \{0,1\}$.

\subsection{The \GEN/\PROP/\KILL{} decomposition}

Since $|S| = 2$, there are exactly four possible maps $\{0,1\} \to \{0,1\}$: two constant maps, the identity, and the negation. We restrict to operations without negation (no ANTI transitions). Each symbol $x \in X$ falls into exactly one of three classes, whose sizes we denote $g$, $t$, $k$ respectively.

\begin{definition}[\GEN/\PROP/\KILL{} classification]\label{def:gpk}
A symbol~$x$ is called \GEN{} (generation) if $T_x(\sigma) = 1$ for all~$\sigma$, \PROP{} (propagation) if $T_x(\sigma) = \sigma$ for all~$\sigma$, and \KILL{} (annihilation) if $T_x(\sigma) = 0$ for all~$\sigma$.  We write $g = |\GEN|$, $t = |\PROP|$, $k = |\KILL|$, so that $g + t + k = N$.
\end{definition}

\subsection{Cascade-free sequences}

\begin{definition}\label{def:cascade-free}
A sequence $x_1,\ldots,x_L \in X^L$ is called \emph{cascade-free} if $P^*(x) = 0$, i.e.\ no \PROP{} symbol in the sequence receives the state $\sigma = 1$.
\end{definition}

\begin{proposition}[Equivalent characterisation]\label{prop:equiv}
A sequence is cascade-free if and only if no \GEN{} symbol is immediately followed by a \PROP{} symbol.
\end{proposition}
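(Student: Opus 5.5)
The plan is to characterise exactly when a \PROP{} symbol receives state~$1$, and then compare this with the adjacency condition. I read $P^*(x)$ as the number of positions $k$ with $x_k\in\PROP$ and $\sigma_{k-1}=1$. The key observation is that the state $\sigma_{k-1}$ is determined by the most recent non-\PROP{} symbol before position~$k$. Concretely, I will first prove the following by induction on~$k$, using Definition~\ref{def:gpk} and $\sigma_0=0$.

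\emph{Auxiliary claim.} We have $\sigma_{k}=1$ if and only if there is an index $j\le k$ with $x_j\in\GEN$ and $x_{j+1},\dots,x_k\in\PROP$.

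The induction step splits into cases according to the class of $x_k$. If $x_k\in\KILL$, then $\sigma_k=0$. If $x_k\in\GEN$, then $\sigma_k=1$ and we take $j=k$. If $x_k\in\PROP$, then $\sigma_k=\sigma_{k-1}$ and the induction hypothesis applies. The base case $\sigma_0=0$ matches the absence of any such~$j$.

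With the auxiliary claim, both directions follow.

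\emph{Adjacency implies not cascade-free.} Suppose $x_j\in\GEN$ and $x_{j+1}\in\PROP$. Then $\sigma_j=1$, so the \PROP{} symbol $x_{j+1}$ receives state~$1$ and $P^*(x)>0$.

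\emph{Not cascade-free implies adjacency.} Suppose some \PROP{} symbol $x_k$ receives $\sigma_{k-1}=1$. The claim gives $j\le k-1$ with $x_j\in\GEN$ and $x_{j+1},\dots,x_{k-1}\in\PROP$. Since $x_k$ is also \PROP, the block $x_{j+1},\dots,x_k$ is nonempty and consists of \PROP{} symbols. Hence $x_{j+1}\in\PROP$ directly follows the \GEN{} symbol $x_j$. This is the contrapositive of the remaining direction.

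The only delicate point is this second direction. The offending \PROP{} symbol need not be adjacent to the \GEN{} symbol, since the $1$ may be carried through a run of \PROP{} symbols. The fix is to take the \emph{first} \PROP{} symbol after~$x_j$, namely $x_{j+1}$, rather than $x_k$ itself. The rest is routine case checking.
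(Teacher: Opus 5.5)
Your proof is correct, but it is organised differently from the paper's. For the nontrivial direction the paper argues directly. Assuming there is no $\GEN\to\PROP$ adjacency, it inducts over positions to show that every \PROP{} symbol receives state~$0$. The only input is that the predecessor of a \PROP{} symbol must then be \KILL{}, which resets the state, or \PROP{}, which passes on the $0$ already established for it.

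You instead prove an unconditional description of the whole trajectory: $\sigma_k=1$ exactly when the last non-\PROP{} symbol among $x_1,\dots,x_k$ exists and is \GEN{}. You then read off both directions from it, the harder one by contrapositive with the explicit witness pair $(x_j,x_{j+1})$.

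Both arguments rest on the same local fact: \PROP{} copies the state, while \GEN{} and \KILL{} overwrite it. The difference is in packaging.
\begin{itemize}
\item The paper's induction is shorter, because it builds the hypothesis into the induction.
\item Your lemma is stronger and reusable. It holds for every sequence and makes explicit the mechanism you point out, a $1$ carried through a run of \PROP{} symbols. For instance, it shows at once that $P^*$ counts exactly the \PROP{} symbols lying in maximal \PROP{} runs that are immediately preceded by a \GEN{} symbol.
\end{itemize}

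One small point in the \PROP{} case of your induction step: say explicitly that $j=k$ is impossible because $x_k\notin\GEN$. That is what makes the existence conditions at $k$ and at $k-1$ equivalent.
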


\begin{proof}
($\Rightarrow$) Suppose \GEN{} at position~$j$ is directly followed by \PROP{} at position~$j{+}1$. Since \GEN{} forces $\sigma_j = 1$ regardless of the input state, the \PROP{} symbol at~$j{+}1$ receives $\sigma_j = 1$ and preserves it. Hence $P^* \ge 1$, contradicting the cascade-free assumption.

($\Leftarrow$) Suppose no $\GEN\to\PROP$ adjacency exists. We show by induction on~$k$ that the state before every \PROP{} symbol is~$0$. The base is $\sigma_0 = 0$. For the inductive step, consider position~$k$ with $x_k \in \PROP$. The predecessor $x_{k-1}$ is either \KILL{} or \PROP{} (since $\GEN\to\PROP$ is excluded). If $x_{k-1} \in \KILL$, then $\sigma_{k-1} = 0$ regardless of~$\sigma_{k-2}$. If $x_{k-1} \in \PROP$, then $\sigma_{k-1} = \sigma_{k-2}$, and by the inductive hypothesis $\sigma_{k-2} = 0$. In both cases $\sigma_{k-1} = 0$, so the \PROP{} at position~$k$ receives state~$0$ and $P^* = 0$.
\end{proof}

\subsection{Transfer matrix construction}\label{sec:transfer}

We now set up the transfer matrix that counts cascade-free sequences. The key idea is to classify each position by the state it produces, and to track which transitions to the next position are forbidden.

Partition the alphabet into two groups based on the successor state they create: a position occupied by a \KILL{} or \PROP{} symbol leaves the system in a ``rest'' state (denoted~R), while a \GEN{} symbol puts the system in a ``generating'' state (denoted~G). From state~R, every successor symbol is allowed; from state~G, only \KILL{} and \GEN{} are allowed (since \PROP{} is forbidden after \GEN). This gives the $2 \times 2$ transfer matrix (cf.~\cite{stanley}, Chapter~4)
\begin{equation}\label{eq:transfer}
T = \begin{pmatrix} k+t & g \\ k & g \end{pmatrix},
\end{equation}
where the rows and columns are indexed by (R, G).

\subsection{Invariants of the transfer matrix}

\begin{lemma}\label{obs:invariants}
$\Tr(T) = N$ and $\Det(T) = tg =: \mathfrak{d}$. Both depend only on~$N$ and~$\mathfrak{d}$, not on the individual values of~$g$, $t$, $k$.
\end{lemma}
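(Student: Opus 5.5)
The plan is a direct computation from the explicit form of the transfer matrix~\eqref{eq:transfer}, followed by a short remark explaining what the ``only depends on'' clause means.

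\textbf{Trace.} The diagonal entries of $T$ are $T_{RR}=k+t$ and $T_{GG}=g$. Hence $\Tr(T)=k+t+g$, and this equals $N$ by the relation $g+t+k=N$ from Definition~\ref{def:gpk}.

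\textbf{Determinant.} Expanding the $2\times 2$ determinant gives
\[
\Det(T)=(k+t)g-gk=kg+tg-gk=tg,
\]
so $\Det(T)=tg=\mathfrak{d}$. The point to note is that the $k$-dependence cancels exactly: the $kg$ coming from the diagonal product is matched by the off-diagonal product $g\cdot k$.

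\textbf{Dependence claim.} The second sentence of the lemma is a consequence of the two formulas rather than a separate claim. $\Tr(T)$ is a function of $N$ alone and $\Det(T)$ is a function of $\mathfrak{d}$ alone. Consequently the characteristic polynomial
\[
\lambda^2-N\lambda+\mathfrak{d}
\]
is the same for any two triples $(g,t,k)$ sharing $N$ and $\mathfrak{d}$. For example, $(g,t,k)=(1,2,3)$ and $(2,1,3)$ both give $N=6$ and $\mathfrak{d}=2$. This is the form in which the lemma will be used later, via Cayley--Hamilton, to obtain the recurrence in Theorem~\ref{thm:intro-main}(a).

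There is no real obstacle here. The only care needed is to read the entries of~\eqref{eq:transfer} with the correct (R,G) indexing, so that the off-diagonal product is $g\cdot k$ and not, say, $g\cdot t$.
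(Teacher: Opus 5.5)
Your proposal is correct and matches the paper's proof: both compute $\Tr(T)=(k+t)+g=N$ and $\Det(T)=(k+t)g-gk=tg=\mathfrak{d}$ directly from the explicit matrix, and the dependence clause then follows immediately. Your example triples $(1,2,3)$ and $(2,1,3)$, both with $N=6$ and $\mathfrak{d}=2$, correctly illustrate the universality consequence.
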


\begin{proof}
Direct computation: $\Tr(T) = (k + t) + g = N$ and $\Det(T) = (k+t)g - gk = tg = \mathfrak{d}$.
\end{proof}

\subsection{Counting theorem}

\begin{theorem}[Cascade-free counting]\label{thm:main}
Let $X$ be an alphabet with $|X| = N$ and \GEN/\PROP/\KILL{} decomposition with $|\GEN|\cdot|\PROP| = \mathfrak{d}$. The number of cascade-free sequences $a(L)$ in $X^L$ satisfies
\begin{equation}\label{eq:recurrence}
a(L) = N\,a(L{-}1) - \mathfrak{d}\,a(L{-}2) \quad\text{for } L \ge 2,
\end{equation}
with $a(0) = 1$, $a(1) = N$.

\emph{Case~1} (simple eigenvalues, $N^2 > 4\mathfrak{d}$). The eigenvalues of~$T$ are
\begin{equation}\label{eq:eigenvalues}
\lambda_{1,2} = \frac{N \pm \sqrt{N^2 - 4\mathfrak{d}}}{2},
\end{equation}
and the closed form is $a(L) = (\lambda_1^{L+1} - \lambda_2^{L+1})/(\lambda_1 - \lambda_2)$.

\emph{Case~2} (double eigenvalue, $N^2 = 4\mathfrak{d}$). This occurs if and only if $k = 0$ and $g = t = N/2$. Then $\lambda_1 = \lambda_2 = N/2$ and $a(L) = (L{+}1)(N/2)^L$.
\end{theorem}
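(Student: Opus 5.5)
I will express $a(L)$ through the transfer matrix of Section~\ref{sec:transfer}, apply Cayley--Hamilton together with Lemma~\ref{obs:invariants} to get the recurrence, and then solve that recurrence in the two eigenvalue cases.

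\emph{Step 1: representation.} For $L \ge 1$ let $r_L$ and $g_L$ be the numbers of cascade-free sequences of length~$L$ whose last symbol is in $\KILL\cup\PROP$, respectively in $\GEN$. By Proposition~\ref{prop:equiv}, admissibility depends only on consecutive pairs. A length-$(L{+}1)$ sequence is therefore obtained by appending a symbol, subject only to the rule that a \PROP{} symbol may not follow a \GEN{} symbol. This gives
\[
(r_{L+1},\,g_{L+1}) = (r_L,\,g_L)\,T, \qquad (r_1,g_1) = (k+t,\,g),
\]
with $T$ as in~\eqref{eq:transfer}. Hence $a(L) = (r_L+g_L) = u^{\mathsf T} T^{L} \mathbf 1$ with $u = (1,0)^{\mathsf T}$, since $(1,0)T = (k+t,g)$. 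This formula also gives $a(0)=1$ and $a(1)=N$. The case $L=0$ works because the empty sequence starts in the rest state, consistent with $\sigma_0=0$.

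\emph{Step 2: recurrence.} By Cayley--Hamilton, $T^2 = \Tr(T)\,T - \Det(T)\,I$. By Lemma~\ref{obs:invariants} this reads $T^2 = N\,T - \mathfrak d\, I$. Multiplying by $T^{L-2}$ and sandwiching between $u^{\mathsf T}$ and $\mathbf 1$ yields~\eqref{eq:recurrence} for $L\ge 2$.

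\emph{Step 3: closed forms.} The characteristic polynomial is $\lambda^2 - N\lambda + \mathfrak d$. In Case~1 its roots~\eqref{eq:eigenvalues} are distinct, so $a(L) = A\lambda_1^L + B\lambda_2^L$. Consider $b(L) = (\lambda_1^{L+1}-\lambda_2^{L+1})/(\lambda_1-\lambda_2)$. It satisfies the same recurrence, and $b(0)=1$, $b(1)=\lambda_1+\lambda_2=N$, so $b=a$. In Case~2 there is a double root $\lambda = N/2$, and the solutions are $(A+BL)\lambda^L$. The candidate $(L+1)(N/2)^L$ matches $a(0)=1$ and $a(1)=N$, which identifies it as $a(L)$.

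\emph{Step 4: characterisation of Case~2.} This is the only step needing a small argument. Write $N = g+t+k$. By AM--GM, $N^2 \ge (g+t)^2 \ge 4gt = 4\mathfrak d$. The first inequality is an equality if and only if $k=0$, and the second if and only if $g=t$. Hence $N^2 = 4\mathfrak d$ if and only if $k=0$ and $g=t=N/2$. The same chain shows $N^2 \ge 4\mathfrak d$ always, so Cases~1 and~2 cover every decomposition. I expect no real obstacle. The main care point is the bookkeeping in Step~1: the initial vector must correctly encode $\sigma_0=0$, and the count must come out in terms of $T^L$ rather than $T^{L-1}$.
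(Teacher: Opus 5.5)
Your proof is correct and follows the paper's route: represent the count via the transfer matrix $T$, use Cayley--Hamilton with $\Tr(T)=N$ and $\Det(T)=\mathfrak{d}$ to get the recurrence, match the closed forms against $a(0)=1$, $a(1)=N$, and use AM--GM for Case~2. Writing $a(L)=(1,0)\,T^L\mathbf{1}$ instead of the paper's $v_0^{\mathsf T}T^{L-1}\mathbf{1}$ is a small improvement. Your form makes the Cayley--Hamilton step valid already at $L=2$, where the paper's form directly gives the recurrence only for $L\ge 3$. Your Step~4 additionally shows that the two cases are exhaustive.
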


\begin{proof}
We first establish the initial conditions and then derive the recurrence from the transfer matrix.

The initial conditions are $a(0) = 1$ (the empty sequence is vacuously cascade-free) and $a(1) = N$ (a single-symbol sequence has no adjacent pair, so every symbol is admissible).

For $L \ge 2$, the cascade-free count is computed as
\[
a(L) = v_0^{\mathsf{T}} T^{L-1} \mathbf{1},
\]
where $v_0 = (k{+}t,\, g)^{\mathsf{T}}$ encodes the number of admissible first symbols entering each state and $\mathbf{1} = (1,1)^{\mathsf{T}}$. The characteristic polynomial of~$T$ is $\lambda^2 - N\lambda + \mathfrak{d} = 0$ by Lemma~\ref{obs:invariants}. By the Cayley--Hamilton theorem, $T^2 = NT - \mathfrak{d}I$, which implies that the sequence $a(L) = v_0^{\mathsf{T}} T^{L-1} \mathbf{1}$ satisfies the recurrence~\eqref{eq:recurrence}.

In Case~1, the general solution of~\eqref{eq:recurrence} is $a(L) = A\lambda_1^L + B\lambda_2^L$ for constants $A$, $B$. From $a(0) = A + B = 1$ and $a(1) = A\lambda_1 + B\lambda_2 = N = \lambda_1 + \lambda_2$, we obtain $A = \lambda_1/(\lambda_1 - \lambda_2)$ and $B = -\lambda_2/(\lambda_1 - \lambda_2)$, yielding the stated closed form.

In Case~2, the discriminant $N^2 - 4\mathfrak{d} = 0$ gives the double eigenvalue $\lambda = N/2$. Since $g + t + k = N$ and $gt = N^2/4$, the AM-GM inequality yields $g = t$ and $k = 0$, so $g = t = N/2$. The general solution is $a(L) = (c_1 + c_2 L)\lambda^L$. From $a(0) = c_1 = 1$ and $a(1) = (1 + c_2)\lambda = 2\lambda = N$, we get $c_2 = 1$, completing the proof.
\end{proof}

\begin{corollary}[Universality]\label{cor:universality}
Two distinct stateful digit-wise operations with the same alphabet size~$N$ and the same product~$\mathfrak{d}$ produce identical cascade-free sequences, regardless of how $g$, $t$, and $k$ are individually distributed.
\end{corollary}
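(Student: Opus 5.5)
The corollary follows from Theorem~\ref{thm:main}, once we observe that the sequence $a(L)$ is completely determined by data depending only on $N$ and $\mathfrak{d}$.

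For two operations with decompositions $(g,t,k)$ and $(g',t',k')$, suppose $g+t+k = g'+t'+k' = N$ and $gt = g't' = \mathfrak{d}$. Write $a(L)$ and $a'(L)$ for their cascade-free counts. By Theorem~\ref{thm:main}, both sequences satisfy the same recurrence~\eqref{eq:recurrence}. The coefficients are $N$ and $\mathfrak{d}$, which by Lemma~\ref{obs:invariants} are the trace and determinant of the respective transfer matrices $T$ and $T'$. The initial values also agree: $a(0)=a'(0)=1$ and $a(1)=a'(1)=N$. A second-order linear recurrence with given initial values has a unique solution, so a straightforward induction on $L$ gives $a(L)=a'(L)$ for all $L\ge 0$.

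One point needs care. Theorem~\ref{thm:main} proves the recurrence from the representation $a(L)=v_0^{\mathsf{T}}T^{L-1}\mathbf{1}$ together with Cayley--Hamilton. That argument gives the recurrence for $L\ge 3$ directly, since it needs $L-1\ge 2$. The case $L=2$ relating $a(2)$ to $a(1),a(0)$ has to be checked separately. Direct counting gives $a(2)=N^2-gt$: of the $N^2$ ordered pairs, exactly $gt$ are forbidden $\GEN\to\PROP$ adjacencies. This equals $N\cdot a(1)-\mathfrak{d}\cdot a(0)$, so the recurrence holds from $L=2$ on and the induction is valid. The value $a(2)$ depends only on $N$ and $\mathfrak{d}$, which confirms the claim in the first nontrivial case.

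As an alternative, one can read off the result from the closed forms in Cases~1 and~2 of Theorem~\ref{thm:main}. These are expressed purely through $\lambda_{1,2}$, the roots of $\lambda^2-N\lambda+\mathfrak{d}$, and so depend only on $(N,\mathfrak{d})$. Even $T$ and $T'$ themselves need not coincide; only their characteristic polynomials must agree. I expect no genuine obstacle here, since the statement is essentially a restatement of Theorem~\ref{thm:main}. The only subtle point is the base case $L=2$ noted above.
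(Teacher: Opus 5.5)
Your argument is correct and matches the paper, which states the corollary without separate proof as an immediate consequence of Theorem~\ref{thm:main}: the recurrence coefficients $N,\mathfrak{d}$ and the initial values $a(0)=1$, $a(1)=N$ depend only on $(N,\mathfrak{d})$. Your separate check $a(2)=N^2-gt$ is a valid tightening, since the paper's Cayley--Hamilton step on $v_0^{\mathsf{T}}T^{L-1}\mathbf{1}$ literally covers only $L\ge 3$ (alternatively, $v_0^{\mathsf{T}}=e_R^{\mathsf{T}}T$ gives $a(L)=e_R^{\mathsf{T}}T^{L}\mathbf{1}$ for all $L\ge 0$, so the recurrence holds from $L=2$ on).
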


\subsection{Parametrisation}

The cascade-free sequences form a two-parameter family $\{a_{N,d}\}_{N \ge 1,\, 0 \le d \le \lfloor N^2/4\rfloor}$ with $d = \mathfrak{d}$. At the boundary $d = 0$ there is no state propagation and $a(L) = N^L$; at $d = \lfloor N^2/4 \rfloor$ the growth is slowest, $a(L) = (L{+}1)(N/2)^L$, with maximal state interaction. The inequality $\mathfrak{d} \le N^2/4$ follows from AM-GM applied to $g + t \le N$ with $gt = \mathfrak{d}$.

\section{Chebyshev polynomials and the coupling parameter}\label{sec:chebyshev}

\subsection{The Chebyshev representation}

\begin{theorem}[Chebyshev representation]\label{thm:chebyshev}
For $\mathfrak{d} > 0$,
\begin{equation}\label{eq:chebyshev}
a(L) = (\sqrt{\mathfrak{d}}\,)^L \cdot U_L(x), \quad\text{where } x = \frac{N}{2\sqrt{\mathfrak{d}}}.
\end{equation}
It holds that $x \ge 1$, with equality if and only if $k = 0$ and $g = t = N/2$.
\end{theorem}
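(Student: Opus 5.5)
The plan is to show that $b(L) := (\sqrt{\mathfrak{d}}\,)^L U_L(x)$ satisfies the same recurrence and initial conditions as $a(L)$ in Theorem~\ref{thm:main}. Uniqueness of solutions to a second-order linear recurrence then gives $a(L)=b(L)$ for all $L$. This avoids any case distinction between simple and double eigenvalues.

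First check the initial values. We have $b(0)=U_0(x)=1=a(0)$ and $b(1)=\sqrt{\mathfrak{d}}\cdot 2x = \sqrt{\mathfrak{d}}\cdot N/\sqrt{\mathfrak{d}} = N = a(1)$. For $L\ge 2$, multiply the Chebyshev recurrence \eqref{eq:cheb-recurrence} by $(\sqrt{\mathfrak{d}}\,)^L$:
\[
b(L) = 2x\sqrt{\mathfrak{d}}\,(\sqrt{\mathfrak{d}}\,)^{L-1}U_{L-1}(x) - \mathfrak{d}\,(\sqrt{\mathfrak{d}}\,)^{L-2}U_{L-2}(x) = N\,b(L{-}1) - \mathfrak{d}\,b(L{-}2).
\]
This uses $2x\sqrt{\mathfrak{d}}=N$, which is exactly the choice of coupling parameter, so $b$ satisfies \eqref{eq:recurrence}. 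Induction on $L$ then yields \eqref{eq:chebyshev}. As a cross-check, one can also read this off the closed form: the eigenvalues \eqref{eq:eigenvalues} equal $\sqrt{\mathfrak{d}}\,\alpha$ and $\sqrt{\mathfrak{d}}\,\beta$ with $\alpha,\beta$ as in \eqref{eq:cheb-closed}, because $\lambda_1\lambda_2=\mathfrak{d}$ and $\lambda_1+\lambda_2 = N = 2x\sqrt{\mathfrak{d}}$.

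For the inequality, $x\ge 1$ is equivalent to $N^2\ge 4\mathfrak{d}=4gt$. Apply AM--GM: $4gt\le (g+t)^2\le (g+t+k)^2=N^2$, using $g,t,k\ge 0$. Equality forces both steps to be tight. The second step requires $k=0$, provided $g+t>0$, which holds since $\mathfrak{d}>0$. The first step requires $g=t$, hence $g=t=N/2$. Conversely, these values give $x=1$. This matches Case~2 of Theorem~\ref{thm:main}, and consistently $U_L(1)=L+1$ recovers $a(L)=(L{+}1)(N/2)^L$.

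No step is a real obstacle. The only care needed is to note that $\mathfrak{d}>0$ is what makes $\sqrt{\mathfrak{d}}$ nonzero, so that $x$ is defined. It also excludes the degenerate case $g+t=0$ in the equality analysis.
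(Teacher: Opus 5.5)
Your proposal is correct and follows essentially the same route as the paper: the paper rescales in the opposite direction, setting $b(L)=a(L)/(\sqrt{\mathfrak{d}}\,)^L$ and checking that it obeys the Chebyshev recurrence with $b(0)=U_0(x)$ and $b(1)=U_1(x)$, which is the same computation as yours, and then uses the identical AM--GM chain $gt\le (g+t)^2/4\le N^2/4$ together with its equality case. Your caveat $g+t>0$ is harmless but not needed, since $(g+t)^2=(g+t+k)^2$ with $g,t,k\ge 0$ already forces $k=0$.
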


\begin{proof}
Rescale by setting $b(L) = a(L)/(\sqrt{\mathfrak{d}}\,)^L$. Dividing the recurrence~\eqref{eq:recurrence} by $(\sqrt{\mathfrak{d}}\,)^L$ yields
\[
b(L) = \frac{N}{\sqrt{\mathfrak{d}}}\,b(L{-}1) - b(L{-}2) = 2x\,b(L{-}1) - b(L{-}2),
\]
with initial conditions $b(0) = 1 = U_0(x)$ and $b(1) = N/\sqrt{\mathfrak{d}} = 2x = U_1(x)$. Since $b$ satisfies the same recurrence~\eqref{eq:cheb-recurrence} with the same initial conditions as $U_L(x)$, we conclude $b(L) = U_L(x)$ for all $L \ge 0$.

The inequality $x \ge 1$ is equivalent to $N^2 \ge 4\mathfrak{d} = 4gt$. Since $g + t \le g + t + k = N$, the AM-GM inequality gives $gt \le (g+t)^2/4 \le N^2/4$, with equality if and only if $g = t$ and $k = 0$.
\end{proof}

\subsection{The coupling parameter}

We call $x = N/(2\sqrt{\mathfrak{d}})$ the \emph{coupling parameter} of the stateful operation. It measures how strongly the $\GEN$--$\PROP$ coupling suppresses the cascade-free count relative to the unrestricted count~$N^L$.

\begin{remark}[Limiting behaviour]\label{rem:limiting}
For $x \to \infty$ (weak coupling), $a(L) \sim N^L$. For $x = 1$ (maximal coupling), $a(L) = (N/2)^L(L{+}1)$. The rate drops from~$N$ to~$N/2$ with a polynomial correction.  The cascade-free density $\rho(L) = a(L)/N^L$ decays exponentially at rate $-\ln(\lambda_1/N)$.
\end{remark}

\subsection{Geometric interpretation}

Corollary~\ref{cor:universality} has a geometric explanation: all stateful operations with the same coupling parameter~$x$ lie on the same Chebyshev curve $L \mapsto U_L(x)$. The family of all cascade-free sequences becomes a one-dimensional object, parametrised by $x \in [1,\infty)$ alone.

\subsection{Generating function}

The generating function is
\begin{equation}\label{eq:gf}
\sum_{L \ge 0} a(L)\,z^L = \frac{1}{1 - Nz + \mathfrak{d}\,z^2},
\end{equation}
rational with degree-$2$ denominator. This follows from the Chebyshev generating function~\eqref{eq:cheb-gf} after the substitution $z \mapsto \sqrt{\mathfrak{d}}\,z$.

\begin{corollary}
Every OEIS sequence with generating function $1/(1 - Nz + dz^2)$ for integers $N \ge 1$, $d \ge 0$ is a cascade-free sequence for a stateful digit-wise operation with parameters $(N, \mathfrak{d} = d)$.
\end{corollary}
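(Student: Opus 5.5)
By Theorem~\ref{thm:main} and the generating function~\eqref{eq:gf}, a cascade-free sequence is determined entirely by the pair $(N,\mathfrak{d})$. So I would not compare sequences term by term. The task reduces to a \emph{realisation problem}: given integers $N \ge 1$ and $d \ge 0$, exhibit an alphabet of size $N$ with a \GEN/\PROP/\KILL{} decomposition satisfying $g t = d$. Once such $(g,t,k)$ exist, the two rational functions $1/(1-Nz+dz^2)$ and $\sum_L a(L)z^L$ coincide, and so do their coefficients. Corollary~\ref{cor:universality} then shows that the choice of realisation does not matter.

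The plan for the realisation is as follows.
\begin{itemize}
\item \emph{Case $d=0$.} Take $t=0$, $g=0$, $k=N$. The operation is pure \KILL, $\mathfrak{d}=0$, and $a(L)=N^L$, matching $1/(1-Nz)$.
\item \emph{Case $d>0$.} Write $d = g\,t$ with $g$ the largest divisor of $d$ not exceeding $\sqrt d$ and $t=d/g$. This is the factorisation minimising $g+t$. Put $k = N-g-t$.
\end{itemize}
One then checks that $k\ge 0$. With that, $g+t+k=N$ and $gt=d$, and Lemma~\ref{obs:invariants} gives $\Tr(T)=N$ and $\Det(T)=d$. Any concrete $N$-letter alphabet with $g$ constant-one maps, $t$ identity maps and $k$ constant-zero maps is then a stateful digit-wise operation in the sense of Definition~\ref{def:gpk}. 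Its cascade-free count has the required generating function.

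The main obstacle is the inequality $g + d/g \le N$. The bound $d \le \lfloor N^2/4\rfloor$ from the Parametrisation subsection is necessary, but it is not sufficient by itself. It suffices when $d$ has a divisor close to $\sqrt d$, for instance when $d=m(N-m)$ or when $d$ is a perfect square. It fails for pairs like $N=d=5$, where the only factorisations give $g+t=6>N$.

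I would therefore read the corollary, as the paper's Parametrisation subsection implicitly does, as ranging over the family $\{a_{N,d}\}$ of pairs admitting such a decomposition. I would state explicitly in the proof that the hypothesis used is \emph{the existence of a divisor $e\mid d$ with $e+d/e\le N$}, and check $k\ge0$ under that hypothesis. I would then close by remarking that for pairs violating it, the sequence $1/(1-Nz+dz^2)$ can be realised only by relaxing the fixed alphabet size. One such relaxation is to use an alphabet of size $N'\ge g+t$ together with a weighting of symbols, which changes $\Tr(T)$ accordingly.
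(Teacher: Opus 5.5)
Your proposal is correct and follows the paper's own argument, with the missing step supplied. The paper gives no separate proof; it reads the corollary off \eqref{eq:gf} and tacitly assumes that for every $N\ge1$, $d\ge0$ there is a decomposition with $g+t+k=N$ and $gt=d$. Your realisation step exposes exactly that assumption, and your conclusion is right: the statement is false as written. It fails trivially for $d>N^2/4$ (for $N=d=1$ the series is $1,1,0,-1,\dots$). It also fails for $(N,d)=(5,5)$, the smallest pair with $d\le\lfloor N^2/4\rfloor$ that admits no factorisation $d=gt$ with $g+t\le N$.

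To make the counterexample airtight, add one line. Every cascade-free sequence has $a(1)=N$ and $a(2)=N^2-\mathfrak{d}$, so the sequence $1,5,20,75,\dots$ forces $(N,\mathfrak{d})=(5,5)$, and no operation of \emph{any} alphabet size produces it. The same observation shows your divisor condition is necessary as well as sufficient, so your restricted statement is the sharp one.

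Two corrections to your framing. First, the Parametrisation subsection does not implicitly restrict to realisable pairs. It explicitly indexes the family by all $0\le d\le\lfloor N^2/4\rfloor$, so it makes the same overclaim and is refuted by the same example. Second, drop or reword the closing ``weighting'' remark. Once symbols carry weights, $a(L)$ is a weighted sum over sequences, not a number of cascade-free sequences. If \GEN{} and \PROP{} keep unit weights, matching trace $N<g+t$ even forces negative \KILL{} weight. Either way it does not rescue the corollary as a counting statement.
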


\begin{question}\label{q:oeis}
Which further carry or doubling sequences arising from specific arithmetic operations coincide with known OEIS entries via the generating function $1/(1 - Nz + dz^2)$?
\end{question}

\subsection{The discriminant structure}\label{sec:discriminant}

The discriminants for odd~$p$:
\begin{align*}
\Delta_{\mathrm{carry}} &= p^2\bigl((p{-}1)^2 + 1\bigr), \\
\Delta_{\mathrm{dbl}} &= (p{-}1)^2 + 1.
\end{align*}
Hence $\Delta_{\mathrm{carry}} = p^2 \cdot \Delta_{\mathrm{dbl}}$ and $\lambda_{\mathrm{carry}} = p\,\lambda_{\mathrm{dbl}}$, consistent with Theorem~\ref{thm:scaling}.

The eigenvalues are rational if and only if $(p{-}1)^2 + 1$ is a perfect square. Setting $m^2 = (p{-}1)^2 + 1$ gives $m^2 - (p{-}1)^2 = 1$, hence $(m{-}(p{-}1))(m{+}(p{-}1)) = 1$. Since both factors are positive integers, both equal~$1$, forcing $p{-}1 = 0$. The eigenvalues are therefore irrational for every $p \ge 2$.

\begin{table}[ht]
\centering
\caption{Discriminant spectrum.}
\label{tab:discriminant}
\begin{tabular}{ccccc}
\toprule
$p$ & $\Delta_{\mathrm{dbl}}$ & Factorisation & Nature & $x$ \\
\midrule
2 & 2 & 2 & $\sqrt{2}$ irrational & $\sqrt{2} \approx 1.414$ \\
3 & 5 & 5 & $\sqrt{5}$, golden ratio & $3/2 = 1.500$ \\
5 & 17 & 17 & Fermat prime & $5/\sqrt{8} \approx 1.768$ \\
7 & 37 & 37 & prime & $7/\sqrt{12} \approx 2.021$ \\
13 & 145 & $5 \cdot 29$ & composite & $13/\sqrt{24} \approx 2.653$ \\
\bottomrule
\end{tabular}
\end{table}

The coupling parameter grows as $\sqrt{p/2}$ for large~$p$. The ``strong coupling'' regime ($x < 2$) covers only $p \in \{2,3,5\}$.

\begin{question}\label{q:bunyakovsky}
Is $(p{-}1)^2 + 1$ prime for infinitely many primes~$p$?  This is a special case of the Bunyakovsky conjecture; see \cite{ribenboim}.
\end{question}

\section{Arithmetic instances}\label{sec:instances}

\subsection{Carry propagation in addition}\label{sec:carry}

For digit-wise addition of two numbers in base~$p$, the alphabet is $X = \{0,\ldots,p{-}1\}^2$, so $N = p^2$. A digit pair $(a,b)$ with incoming carry $\sigma \in \{0,1\}$ produces sum $a + b + \sigma$ and outgoing carry $\lfloor(a+b+\sigma)/p\rfloor$. The \GEN/\PROP/\KILL{} decomposition classifies pairs by whether $a + b \ge p$ (carry regardless of input: \GEN), $a + b = p - 1$ (carry if and only if $\sigma = 1$: \PROP), or $a + b \le p - 2$ (no carry regardless: \KILL). Counting the pairs in each class yields $g = p(p{-}1)/2$, $t = p$, $k = p(p{-}1)/2$, and $\mathfrak{d} = p^2(p{-}1)/2$.

The coupling parameter is $x_{\mathrm{carry}} = p/\sqrt{2(p{-}1)}$.

Theorem~\ref{thm:main} with these parameters gives the recurrence $a(L) = p^2\,a(L{-}1) - p^2(p{-}1)/2\cdot a(L{-}2)$. The sequence for $p = 2$ is $1,4,14,48,164,560,\ldots$ (OEIS~A007070~\cite{oeis-A007070}).

\subsection{Carry propagation in doubling}\label{sec:doubling}

Doubling $2m$ of a number~$m$ in base~$p$ processes single digits, so the alphabet is $X = \{0,\ldots,p{-}1\}$ with $N = p$. A digit $d$ with incoming carry $\sigma \in \{0,1\}$ produces $2d + \sigma$ with outgoing carry $\lfloor(2d + \sigma)/p\rfloor$. For odd~$p$: a digit~$d$ satisfies $2d \ge p$ (i.e.\ $d \ge (p+1)/2$: \GEN) for $(p{-}1)/2$ values, $2d = p - 1$ (i.e.\ $d = (p{-}1)/2$: \PROP) for exactly one value, and $2d \le p - 2$ (\KILL) for $(p{-}1)/2$ values. Hence $g = (p{-}1)/2$, $t = 1$, $k = (p{-}1)/2$, $\mathfrak{d} = (p{-}1)/2$.

The coupling parameter is $x_{\mathrm{dbl}} = p/\sqrt{2(p{-}1)} = x_{\mathrm{carry}}$.

\begin{theorem}[Cascade-free doubling]\label{thm:doubling}
For odd~$p$, the number $a_{\mathrm{dbl}}(L)$ of $L$-digit numbers $m \in [0, p^L)$ whose doubling is cascade-free satisfies
\[
a_{\mathrm{dbl}}(L) = p\,a_{\mathrm{dbl}}(L{-}1) - \tfrac{p{-}1}{2}\,a_{\mathrm{dbl}}(L{-}2),
\]
with eigenvalues $\lambda_{1,2} = (p \pm \sqrt{(p{-}1)^2 + 1})/2$. For $p = 2$, $a_{\mathrm{dbl}}(L) = 2^L$ (trivial).
\end{theorem}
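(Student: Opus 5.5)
The plan is to reduce Theorem~\ref{thm:doubling} to Theorem~\ref{thm:main} by identifying $L$-digit numbers with words over the alphabet of digits. A number $m \in [0,p^L)$ corresponds bijectively to its zero-padded base-$p$ digit string $x_1,\ldots,x_L$, read from the least significant digit upward. The carry recursion $\sigma_k = \lfloor (2x_k+\sigma_{k-1})/p\rfloor$ with $\sigma_0=0$ is then exactly a stateful digit-wise operation on $X=\{0,\ldots,p-1\}$, with $N=p$. "Doubling is cascade-free" means precisely that the sequence is cascade-free in the sense of Definition~\ref{def:cascade-free}. So $a_{\mathrm{dbl}}(L)$ is the cascade-free count $a(L)$ for this operation.

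Next I would verify that every digit falls into exactly one of \GEN, \PROP, \KILL{}, so that Definition~\ref{def:gpk} applies and no negation-type transition occurs. For odd $p$ the case analysis is as follows.
\begin{itemize}
\item If $2d \ge p$, i.e.\ $d\ge (p+1)/2$, then $2d+\sigma\in[p,2p-1]$, so the outgoing carry is $1$ for both values of $\sigma$. These digits are \GEN.
\item If $2d=p-1$, then $2d+\sigma=p-1+\sigma$, so the outgoing carry equals $\sigma$. This digit is \PROP.
\item If $2d\le p-2$, then $2d+\sigma\le p-1$, so the outgoing carry is $0$. These digits are \KILL.
\end{itemize}
Since $p$ is odd, $2d$ is never equal to $p$ minus an even nonzero amount in a way that escapes these cases. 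The three ranges partition $\{0,\ldots,p-1\}$, giving $g=k=(p-1)/2$, $t=1$, and hence $\mathfrak{d}=(p-1)/2$.

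Theorem~\ref{thm:main} then yields the recurrence $a(L)=p\,a(L-1)-\frac{p-1}{2}a(L-2)$ with $a(0)=1$ and $a(1)=p$. The eigenvalues are $(p\pm\sqrt{p^2-2(p-1)})/2$, and $p^2-2p+2=(p-1)^2+1$ gives the stated form. Since $(p-1)^2+1>0$, we are in Case~1 of Theorem~\ref{thm:main}, so the eigenvalues are simple.

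For $p=2$ the classification changes. Digit $0$ gives carry $\lfloor\sigma/2\rfloor=0$ and is \KILL; digit $1$ gives $\lfloor(2+\sigma)/2\rfloor=1$ and is \GEN. There are no \PROP{} digits, so $t=0$ and $\mathfrak{d}=0$, and the recurrence collapses to $a(L)=2a(L-1)$, giving $2^L$. Equivalently, the cascade-free condition is vacuous here.

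The main obstacle is only bookkeeping. The first point is confirming the bijection between numbers below $p^L$ and digit words, including leading zeros, so that the count is over all of $X^L$. The second is checking that the cascade-free notion for the doubling operation matches the abstract one, which holds by construction.
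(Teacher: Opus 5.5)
Your proposal is correct and follows the paper's route. The paper also classifies digits as \GEN{} ($d \ge (p+1)/2$), \PROP{} ($d=(p-1)/2$) and \KILL{}, obtains $g=k=(p-1)/2$, $t=1$, $\mathfrak{d}=(p-1)/2$, and applies Theorem~\ref{thm:main} with $N=p$, reading off the eigenvalues from the discriminant $p^2-2(p-1)=(p-1)^2+1$; your sentence about oddness ``escaping these cases'' is unnecessary, since $2d\ge p$, $2d=p-1$, $2d\le p-2$ partition the digits for any $p$, and oddness only enters the counts (for instance, that $2d=p-1$ has exactly one solution).
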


\subsection{The carry--doubling scaling law}

\begin{theorem}[Carry--doubling relation]\label{thm:scaling}
For all odd primes~$p$ and all $L \ge 0$,
\begin{equation}\label{eq:scaling}
a_{\mathrm{carry}}(L) = p^L \cdot a_{\mathrm{dbl}}(L).
\end{equation}
For $p = 2$ the scaling law fails because the \GEN/\PROP/\KILL{} proportions differ.
\end{theorem}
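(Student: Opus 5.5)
The plan is to compare the two second-order recurrences given by Theorem~\ref{thm:main} and conclude by induction on $L$. From Section~\ref{sec:carry}, addition has $N = p^2$ and $\mathfrak{d}_{\mathrm{carry}} = p^2(p{-}1)/2$. From Section~\ref{sec:doubling} and Theorem~\ref{thm:doubling}, doubling has $N = p$ and $\mathfrak{d}_{\mathrm{dbl}} = (p{-}1)/2$. The structural observation is that
\[
N_{\mathrm{carry}} = p\cdot N_{\mathrm{dbl}}, \qquad \mathfrak{d}_{\mathrm{carry}} = p^2\cdot \mathfrak{d}_{\mathrm{dbl}}.
\]
In other words, the carry recurrence is obtained from the doubling recurrence by the substitution $\lambda \mapsto p\lambda$.

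The key step is to set $b(L) = p^L a_{\mathrm{dbl}}(L)$ and check that $b$ obeys the carry recurrence. Multiplying the doubling recurrence by $p^L$ gives
\[
b(L) = p\cdot p\, b(L{-}1) - \tfrac{p-1}{2}\, p^2\, b(L{-}2) = p^2\, b(L{-}1) - \mathfrak{d}_{\mathrm{carry}}\, b(L{-}2).
\]
The initial values also agree: $b(0) = 1 = a_{\mathrm{carry}}(0)$ and $b(1) = p\cdot p = p^2 = a_{\mathrm{carry}}(1)$. Since a second-order recurrence is determined by two initial values, induction on $L$ gives $b(L) = a_{\mathrm{carry}}(L)$ for all $L \ge 0$.

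As a cross-check, Theorem~\ref{thm:chebyshev} expresses both sequences through the same Chebyshev value. The two coupling parameters coincide, $x_{\mathrm{carry}} = x_{\mathrm{dbl}} = p/\sqrt{2(p-1)}$, and $\sqrt{\mathfrak{d}_{\mathrm{carry}}} = p\sqrt{\mathfrak{d}_{\mathrm{dbl}}}$. Hence
\[
a_{\mathrm{carry}}(L) = p^L\bigl(\sqrt{\mathfrak{d}_{\mathrm{dbl}}}\bigr)^L U_L(x) = p^L a_{\mathrm{dbl}}(L).
\]
This is consistent with the discriminant relation in Section~\ref{sec:discriminant}.

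There is no serious obstacle in the argument itself; the only point needing care is that the inputs from Section~\ref{sec:instances} are correct. Specifically, I would recheck the class sizes: for addition, $g = p(p-1)/2$ and $t = p$; for doubling, $g = (p-1)/2$ and $t = 1$. The doubling counts use that $p$ is odd, since then $2d = p-1$ has the unique solution $d = (p-1)/2$. For $p = 2$ this fails: no digit is \PROP{}, so $\mathfrak{d}_{\mathrm{dbl}} = 0$ and $a_{\mathrm{dbl}}(L) = 2^L$. Then $p^L a_{\mathrm{dbl}}(L) = 4^L$, whereas already $a_{\mathrm{carry}}(2) = 14 \ne 16$ (OEIS~A007070). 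This confirms the stated failure for $p = 2$.
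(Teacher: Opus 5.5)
Your proof is correct and is essentially the paper's argument: set $b(L)=p^L a_{\mathrm{dbl}}(L)$, multiply the doubling recurrence by $p^L$ to obtain the carry recurrence with $\mathfrak{d}_{\mathrm{carry}}=p^2(p-1)/2$, and match the initial values $b(0)=1$, $b(1)=p^2$. Your Chebyshev cross-check and the explicit $p=2$ counterexample ($a_{\mathrm{carry}}(2)=14\neq 16=4^2$) are valid additions that the paper does not spell out.
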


\begin{proof}
Set $b(L) = p^L a_{\mathrm{dbl}}(L)$ and check that $b$ satisfies the carry recurrence.  Using the doubling recurrence from Theorem~\ref{thm:doubling}:
\begin{align*}
b(L) &= p^L\bigl[p\,a_{\mathrm{dbl}}(L{-}1) - \tfrac{p{-}1}{2}\,a_{\mathrm{dbl}}(L{-}2)\bigr] \\
&= p \cdot p^{L-1} a_{\mathrm{dbl}}(L{-}1) \cdot p - \tfrac{p{-}1}{2}\cdot p^{L-2}a_{\mathrm{dbl}}(L{-}2) \cdot p^2 \\
&= p^2 b(L{-}1) - \tfrac{p^2(p{-}1)}{2}\,b(L{-}2).
\end{align*}
This is exactly the carry recurrence (Section~\ref{sec:carry}) with $\mathfrak{d}_{\mathrm{carry}} = p^2(p{-}1)/2$. The initial conditions match: $b(0) = 1 = a_{\mathrm{carry}}(0)$ and $b(1) = p \cdot p = p^2 = a_{\mathrm{carry}}(1)$. Since both sequences satisfy the same recurrence with the same initial conditions, they are identical.
\end{proof}

\begin{remark}
The equality of densities $a_{\mathrm{carry}}(L)/p^{2L} = a_{\mathrm{dbl}}(L)/p^L$ follows from the identical \GEN/\PROP/\KILL{} proportions: $g/N = (p{-}1)/(2p)$, $t/N = 1/p$, $k/N = (p{-}1)/(2p)$ in both cases.
\end{remark}

\begin{question}\label{q:scaling-general}
For which other pairs of stateful operations---one acting on pairs, one on single elements---does an exact scaling law of the form $a_1(L) = c^L\,a_2(L)$ hold?
\end{question}

\subsection{The Fibonacci connection}\label{sec:fibonacci}

For $p = 3$ the doubling recurrence becomes $a(L) = 3a(L{-}1) - a(L{-}2)$.

\begin{theorem}[Fibonacci bisection]\label{thm:fibonacci}
For $p = 3$,
\[
a_{\mathrm{dbl}}(L) = F(2L{+}2) = U_L(3/2),
\]
where $F$ denotes the Fibonacci sequence.
\end{theorem}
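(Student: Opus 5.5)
Both equalities follow from uniqueness of solutions to a second-order linear recurrence, so the plan is to reduce everything to that recurrence and check two initial values. For $p=3$, Theorem~\ref{thm:doubling} (equivalently Theorem~\ref{thm:main} with $N=3$, $g=t=k=1$, $\mathfrak{d}=1$) gives
\[
a_{\mathrm{dbl}}(L)=3\,a_{\mathrm{dbl}}(L{-}1)-a_{\mathrm{dbl}}(L{-}2),\qquad a_{\mathrm{dbl}}(0)=1,\; a_{\mathrm{dbl}}(1)=3 .
\]

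\textbf{The Chebyshev equality.} Since $\mathfrak{d}=1$, Theorem~\ref{thm:chebyshev} applies with coupling parameter $x=N/(2\sqrt{\mathfrak{d}})=3/2$. It gives $a_{\mathrm{dbl}}(L)=1^L\,U_L(3/2)=U_L(3/2)$ directly.

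\textbf{The Fibonacci equality.} I will show that $c(L):=F(2L+2)$, with $F(0)=0$ and $F(1)=1$, satisfies the same recurrence and the same initial values. The initial values are $c(0)=F(2)=1$ and $c(1)=F(4)=3$, which match. For the recurrence I need the bisection identity $F(n+2)=3F(n)-F(n-2)$. It follows from applying $F(m)=F(m-1)+F(m-2)$ twice:
\[
F(n+2)=F(n+1)+F(n)=2F(n)+F(n-1)=3F(n)-F(n-2),
\]
where the last step uses $F(n-1)=F(n)-F(n-2)$. Setting $n=2L$ gives $c(L)=3c(L-1)-c(L-2)$ for $L\ge 2$. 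Uniqueness of solutions of a second-order recurrence with prescribed initial values then yields $a_{\mathrm{dbl}}(L)=F(2L+2)$.

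\textbf{Alternative route.} One could instead compare closed forms. With $x=3/2$, formula~\eqref{eq:cheb-closed} gives $\alpha=(3+\sqrt5)/2=\varphi^2$ and $\beta=\varphi^{-2}$, where $\varphi$ is the golden ratio. Then $U_L(3/2)=(\varphi^{2L+2}-\varphi^{-2L-2})/(\varphi^2-\varphi^{-2})$, and since $\varphi^2-\varphi^{-2}=\sqrt5$, Binet's formula turns this into $F(2L+2)$. The recurrence argument above is simpler, so I would use that one.

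There is no genuine obstacle here. The only steps needing care are the indexing convention for $F$ and the verification that the doubling alphabet for $p=3$ is $g=t=k=1$: digit $2$ is \GEN, digit $1$ is \PROP, digit $0$ is \KILL.
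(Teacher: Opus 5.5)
Your proposal is correct and follows the paper's own proof. The main argument is the same recurrence comparison; you derive the bisection identity $F(n{+}2)=3F(n)-F(n{-}2)$ explicitly, where the paper cites it. Your closed-form route with $\alpha=\varphi^2$, $\beta=\varphi^{-2}=\psi^2$ and $\alpha-\beta=\sqrt5$ is exactly the paper's second (Chebyshev--Binet) proof, and obtaining $a_{\mathrm{dbl}}(L)=U_L(3/2)$ from Theorem~\ref{thm:chebyshev} with $\mathfrak{d}=1$ is how the paper links the two sides.
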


\begin{proof}
We give two arguments, one via recurrences and one via the Chebyshev--Binet connection.

\emph{First proof (recurrence comparison).} The Fibonacci bisection $b(L) = F(2L{+}2)$ satisfies $b(L) = 3b(L{-}1) - b(L{-}2)$, which is a standard identity following from $F(2n{+}2) = 3F(2n) - F(2n{-}2)$ (see e.g.\ \cite{koshy,vajda}). The initial conditions $b(0) = F(2) = 1$ and $b(1) = F(4) = 3$ match $a_{\mathrm{dbl}}(0) = 1$ and $a_{\mathrm{dbl}}(1) = 3$. Since both sequences satisfy the same second-order recurrence with the same initial conditions, they are equal.

\emph{Second proof (Chebyshev--Binet).} For $p = 3$ we have $\mathfrak{d} = 1$ and $x = 3/2$. The closed form~\eqref{eq:cheb-closed} gives $U_L(3/2) = (\alpha^{L+1} - \beta^{L+1})/(\alpha - \beta)$ with $\alpha = (3{+}\sqrt{5})/2 = \varphi^2$ and $\beta = (3{-}\sqrt{5})/2 = \psi^2$, where $\varphi = (1{+}\sqrt{5})/2$ is the golden ratio and $\psi = (1{-}\sqrt{5})/2$. Since $\alpha - \beta = \sqrt{5}$, this yields $U_L(3/2) = (\varphi^{2L+2} - \psi^{2L+2})/\sqrt{5} = F(2L{+}2)$ by Binet's formula.
\end{proof}

\begin{corollary}
The number of $3$-adic numbers $m \in [0, 3^L)$ whose doubling produces no carry cascade is the $(2L{+}2)$-nd Fibonacci number.
\end{corollary}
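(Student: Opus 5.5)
The plan is to read the corollary as a restatement of Theorem~\ref{thm:fibonacci}, using the description of $a_{\mathrm{dbl}}(L)$ in Theorem~\ref{thm:doubling}. By that description, $a_{\mathrm{dbl}}(L)$ counts the integers $m\in[0,3^L)$, written as digit strings of length $L$ in base $3$, whose doubling is cascade-free. Such an $m$ is what the corollary calls a ``$3$-adic number'' in $[0,3^L)$, namely a residue modulo $3^L$. So the only thing to check is that the objects counted in the corollary coincide with the objects counted by $a_{\mathrm{dbl}}(L)$. After that the value is $F(2L{+}2)$ by Theorem~\ref{thm:fibonacci}.

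First, I would fix the bijection between $m\in[0,3^L)$ and digit sequences $(d_1,\dots,d_L)\in\{0,1,2\}^L$, with the least significant digit first so that carries travel in the processing direction. I would also allow leading zeros, so that every $m<3^L$ gets a string of length exactly $L$. This is the convention already used in Section~\ref{sec:doubling}, where the alphabet is $X=\{0,\dots,p-1\}$ with $N=p$.

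Second, I would apply the classification for $p=3$. Digit $2$ is \GEN{}, since $4\ge 3$. Digit $1$ is \PROP{}, since $2\cdot 1=p-1$. Digit $0$ is \KILL{}. Hence $g=t=k=1$ and $\mathfrak{d}=1$. ``No carry cascade'' means $P^*=0$, that is, cascade-free in the sense of Definition~\ref{def:cascade-free}. By Proposition~\ref{prop:equiv}, this is the same as requiring that no digit $2$ is immediately followed by a digit $1$ in the processing order. The count of such strings is then $a_{\mathrm{dbl}}(L)$ by Theorem~\ref{thm:main}, with $N=3$ and $\mathfrak{d}=1$. Theorem~\ref{thm:fibonacci} then gives $a_{\mathrm{dbl}}(L)=F(2L{+}2)$. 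As a sanity check, $L=1$ gives $3=F(4)$, and $L=2$ gives $9-1=8=F(6)$, the excluded string being the single pair $(2,1)$.

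I expect the main obstacle to be the interpretive step rather than any computation. One has to make sure that ``$3$-adic numbers $m\in[0,3^L)$'' means exactly the $3^L$ length-$L$ digit strings with leading zeros allowed. One also has to settle whether a carry leaving the top digit position counts as part of a cascade. It does not matter: the cascade-free condition only looks at the states entering \PROP{} symbols within the $L$ positions, so a final outgoing carry is irrelevant. Once these conventions are fixed, the corollary follows immediately from the results already proved.
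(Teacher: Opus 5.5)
Your proposal is correct and matches the paper's (implicit) argument: the corollary is Theorem~\ref{thm:fibonacci} read through the definition of $a_{\mathrm{dbl}}(L)$ in Theorem~\ref{thm:doubling}, with the $p=3$ classification $g=t=k=1$ (so $\mathfrak{d}=1$) fed into Theorem~\ref{thm:main}. Your explicit conventions — leading zeros allowed, least-significant digit processed first, final outgoing carry irrelevant — are the ones the paper uses without stating them, and your $L=2$ check (only the string $(2,1)$, i.e.\ $m=5$, is excluded) is right.
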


Among the many known interpretations of OEIS~A001906~\cite{oeis-A001906}, we are not aware of one arising from carry propagation or stateful digit-wise operations.

\begin{table}[ht]
\centering
\caption{Cascade-free doubling counts for $p = 3$.}
\label{tab:fibonacci}
\begin{tabular}{ccccc}
\toprule
$L$ & $a_{\mathrm{dbl}}(L)$ & Fibonacci & $U_L(3/2)$ & $a_{\mathrm{carry}}(L)$ \\
\midrule
0 & 1 & $F(2) = 1$ & 1 & 1 \\
1 & 3 & $F(4) = 3$ & 3 & 9 \\
2 & 8 & $F(6) = 8$ & 8 & 72 \\
3 & 21 & $F(8) = 21$ & 21 & 567 \\
4 & 55 & $F(10) = 55$ & 55 & 4455 \\
5 & 144 & $F(12) = 144$ & 144 & 34992 \\
\bottomrule
\end{tabular}
\end{table}

\subsection{The distinguished role of base $p = 3$}\label{sec:why-p3}

The condition $\mathfrak{d} = 1$ holds precisely for $p = 3$ among the primes. It means there is exactly one \PROP{} symbol and one \GEN{} symbol. A sequence in which these two symbols never appear consecutively is cascade-free---the classical problem of non-adjacent forbidden pairs, whose solution is the Fibonacci numbers.

The golden ratio enters because the eigenvalues of the recurrence $\lambda^2 - 3\lambda + 1 = 0$ are $\lambda_{1,2} = \varphi^2, \psi^2$, where $\varphi = (1{+}\sqrt{5})/2$ is the golden ratio. This is a direct consequence of $\varphi^2 = \varphi + 1$: squaring yields $\varphi^4 = 3\varphi^2 - 1$, so $\varphi^2$ satisfies the characteristic polynomial. The Fibonacci bisection $F(2L{+}2)$ then follows from the Binet representation with base $\varphi^2$ instead of $\varphi$.

\subsection{Extensions to $p \ge 5$}

For $p = 5$: $a(L) = 5a(L{-}1) - 2a(L{-}2)$, with eigenvalues $(5 \pm \sqrt{17})/2$.

For $p = 7$: $a(L) = 7a(L{-}1) - 3a(L{-}2)$, with eigenvalues $(7 \pm \sqrt{37})/2$.

\section{Markov chain and variance analysis}\label{sec:markov}

\subsection{Decomposition into local and stateful components}

For any stateful operation the state count decomposes as
\[
|\{k : \sigma_k = 1\}| = G(x) + P^*(x),
\]
where $G(x) = |\{k : x_k \in \GEN\}|$ is the local component and $P^*(x)$ is the stateful component (\PROP{} symbols receiving $\sigma = 1$). For addition in base~$p$, the local component counts positions where $a_k + b_k \ge p$ (digits that generate a carry independently of the incoming state), while the stateful component counts propagated carries. This decomposition refines the classical Kummer theorem~\cite{kummer}: the $p$-adic valuation $\nu_p\binom{m+n}{m} = G + P^*$.

\subsection{Markov chain and stationary distribution}

For uniformly distributed symbols the state sequence $(\sigma_k)$ forms a Markov chain on $\{0,1\}$ (cf.~\cite{kemeny-snell,seneta}) with transition matrix
\[
M = \begin{pmatrix} (t{+}k)/N & g/N \\ k/N & (g{+}t)/N \end{pmatrix}.
\]

\begin{proposition}[Stationary distribution]\label{prop:stationary}
The stationary distribution is $\pi(0) = k/(g{+}k)$, $\pi(1) = g/(g{+}k)$. The chain is doubly stochastic ($\pi = (1/2,1/2)$) if and only if $g = k$.
\end{proposition}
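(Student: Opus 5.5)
The plan is to verify directly that $\pi = (\pi(0),\pi(1)) = \bigl(k/(g{+}k),\, g/(g{+}k)\bigr)$ is a left eigenvector of $M$ with eigenvalue~$1$, i.e.\ $\pi M = \pi$. Since $M$ is a $2\times 2$ stochastic matrix, stationarity is equivalent to a single balance equation across the cut between states~$0$ and~$1$:
\[
\pi(0)\,\frac{g}{N} = \pi(1)\,\frac{k}{N}.
\]
With the proposed $\pi$, both sides equal $gk/\bigl(N(g{+}k)\bigr)$, so balance holds. The first coordinate of $\pi M$ is
\[
\pi(0)\,\frac{t{+}k}{N} + \pi(1)\,\frac{k}{N} = \frac{k(t{+}k) + gk}{N(g{+}k)} = \frac{k\,N}{N(g{+}k)} = \pi(0),
\]
using $g+t+k = N$. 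The second coordinate then follows because $M$ is stochastic and $\pi$ sums to~$1$.

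The one step needing care is uniqueness, together with the hypothesis $g + k > 0$ needed for the formula to make sense. If $g + k = 0$, every symbol is \PROP{} and $M = I$, so every distribution is stationary. I will therefore state the result under $g + k > 0$. In that case the second eigenvalue $\mu = t/N = 1 - (g{+}k)/N$ is strictly less than~$1$. The eigenvalue~$1$ is then simple, so the stationary distribution is unique.

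For the second claim, note first that $M$ is a $2\times 2$ stochastic matrix. It is therefore doubly stochastic exactly when its column sums are also~$1$, which amounts to the off-diagonal entries being equal: $g/N = k/N$, i.e.\ $g = k$. For such a matrix the uniform distribution is stationary. Conversely, by the formula above, $\pi = (1/2,1/2)$ iff $k/(g{+}k) = 1/2$ iff $g = k$. Either reading of the statement is therefore settled by the same equality of off-diagonal entries.
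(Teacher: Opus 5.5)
Your proof is correct. The paper states this proposition without proof, and your direct check that $\pi M = \pi$ via the balance equation $\pi(0)\,g/N = \pi(1)\,k/N$, together with the observation that a $2\times 2$ stochastic matrix is doubly stochastic exactly when its off-diagonal entries agree, is the standard verification the paper takes for granted. Your extra remarks fill in hypotheses the paper leaves implicit: the formula needs $g+k>0$ (otherwise $M=I$ and every distribution is stationary), and uniqueness follows from $\mu = t/N < 1$.
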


\subsection{Autocorrelation}

The second eigenvalue of the transition matrix is $\mu = t/N$.

\begin{proposition}\label{prop:autocorrelation}
$\mathrm{Corr}(\sigma_j, \sigma_{j+m}) = (t/N)^m$. The correlation length is $\xi = 1/\ln(N/t)$.
\end{proposition}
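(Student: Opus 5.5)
The statement to prove is Proposition~\ref{prop:autocorrelation}: $\mathrm{Corr}(\sigma_j,\sigma_{j+m}) = (t/N)^m$, and $\xi = 1/\ln(N/t)$. The plan is to diagonalise the $2\times 2$ transition matrix $M$ and compute the covariance directly. We assume the chain is in stationarity, i.e.\ $\sigma_j\sim\pi$ from Proposition~\ref{prop:stationary}. Otherwise the formula only holds asymptotically, since $\sigma_0=0$ forces a transient.

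\textbf{Step 1: the spectrum of $M$.} Since $M$ is stochastic, one eigenvalue is $1$ with right eigenvector $\mathbf{1}$. The trace is
\[
\Tr(M) = \frac{t+k}{N} + \frac{g+t}{N} = \frac{N+t}{N},
\]
so the second eigenvalue is $\mu = t/N$. Equivalently, $M = \mu I + (1-\mu)\mathbf{1}\pi^{\mathsf T}$. To check this, look at the rows of $M$: row $0$ is $(t+k,\,g)/N$ and row $1$ is $(k,\,g+t)/N$. Each row equals $t/N$ times the corresponding unit vector plus the common vector $(k,g)/N$. Also $(k,g)/N = (1-\mu)\pi^{\mathsf T}$, because
\[
\frac{g+k}{N}=1-\frac{t}{N}.
\]
This rank-one decomposition immediately gives
\[
M^m = \mu^m I + (1-\mu^m)\mathbf{1}\pi^{\mathsf T},
\]
since $\mathbf{1}\pi^{\mathsf T}$ is idempotent and commutes with $I$.

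\textbf{Step 2: the covariance.} From Step~1,
\[
P(\sigma_{j+m}=1\mid\sigma_j=s) = \mu^m[s=1] + (1-\mu^m)\pi_1.
\]
Hence
\[
\mathrm{E}[\sigma_j\sigma_{j+m}] = \pi_1\bigl(\mu^m + (1-\mu^m)\pi_1\bigr),
\]
and subtracting $\pi_1^2$ gives
\[
\mathrm{Cov}(\sigma_j,\sigma_{j+m}) = \mu^m\pi_1(1-\pi_1) = \mu^m\pi_0\pi_1.
\]
In stationarity $\mathrm{Var}(\sigma_j)=\mathrm{Var}(\sigma_{j+m})=\pi_0\pi_1$, so dividing yields $\mathrm{Corr} = \mu^m = (t/N)^m$.

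\textbf{Step 3: the correlation length.} Write
\[
(t/N)^m = e^{-m\ln(N/t)} = e^{-m/\xi}, \qquad \xi = 1/\ln(N/t).
\]

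\textbf{Degenerate cases and the main obstacle.} If $t=0$, the correlation vanishes for $m\ge1$ and $\xi=0$, read as a limit. If $g=0$ or $k=0$, then $\pi$ is degenerate and the variance vanishes; these cases should be excluded by assuming $g,k>0$. The main subtlety is the stationarity assumption. Starting from $\sigma_0=0$, the chain has $\sigma_j$-distribution $\pi + O(\mu^j)$. The exact covariance then becomes
\[
\mu^m\,P(\sigma_j=1)\bigl(1-P(\sigma_j=1)\bigr),
\]
which still holds by the same computation, since the conditional law formula does not depend on the initial distribution. However, the two variances now differ. I would therefore state the result for the stationary chain and note that it holds up to $O(\mu^j)$ corrections otherwise.
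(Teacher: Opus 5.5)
Your proof is correct. The paper states this proposition without a separate proof, relying on the second eigenvalue $\mu = t/N$, the classical stationary autocovariance $\pi_0\pi_1\mu^{|k-j|}$ from Kemeny--Snell, and the decomposition $M = \Pi + \mu R$ with $R = I - \Pi$ that it uses in Proposition~\ref{prop:transient}; your rank-one decomposition $M = \mu I + (1-\mu)\mathbf{1}\pi^{\mathsf T}$ is exactly this, so your route is essentially the paper's implicit one. Your stationarity caveat is a correct sharpening: under $\sigma_0 = 0$ the covariance is $\mu^m p_j(1-p_j)$, matching Proposition~\ref{prop:transient}(c), but the two variances differ, so the exact identity $\mathrm{Corr} = \mu^m$ needs $\sigma_0 \sim \pi$, a hypothesis the paper leaves implicit.
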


\subsection{Expected values}

\begin{theorem}[Stationary expected values]\label{thm:expectations}
In the stationary regime,
\[
\frac{\mathrm{E}[P^*]}{L} = \frac{tg}{N(g{+}k)}, \quad
\frac{\mathrm{E}[P^*]}{\mathrm{E}[\sigma {=} 1]} = \frac{t}{N}.
\]
For carries: $t/N = p/p^2 = 1/p$, consistent with the autocorrelation $\mathrm{Corr}(\sigma_j,\sigma_{j+m}) = p^{-m}$ observed in Proposition~\ref{prop:autocorrelation}.
\end{theorem}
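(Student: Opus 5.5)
We work directly with the stationary Markov chain with transition matrix $M$ and stationary law $\pi$ from Proposition~\ref{prop:stationary}, and use linearity of expectation. Write $P^* = \sum_{j=1}^{L} \mathbf{1}\{x_j \in \PROP,\ \sigma_{j-1} = 1\}$. The first step is to observe that, under uniformly distributed i.i.d.\ symbols, $x_j$ is independent of $\sigma_{j-1}$: the state $\sigma_{j-1}$ is a function of $x_1,\dots,x_{j-1}$ only, together with the stationary initial state. Hence each summand has expectation
\[
\Pr(x_j\in\PROP)\,\Pr(\sigma_{j-1}=1) = \frac{t}{N}\cdot\pi(1) = \frac{t}{N}\cdot\frac{g}{g+k}.
\]
Summing over the $L$ positions and dividing by $L$ gives the first identity, $\mathrm{E}[P^*]/L = tg/(N(g+k))$.

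For the second identity, I interpret $\mathrm{E}[\sigma{=}1]$ as the expected number of positions with $\sigma_j=1$, which in stationarity is $L\,\pi(1) = Lg/(g+k)$. The ratio of the two expectations is then $t/N$. As a consistency check, I would also verify the complementary decomposition $\mathrm{E}[G] = Lg/N$, and confirm that $\mathrm{E}[G]+\mathrm{E}[P^*] = L\pi(1)$. This reduces to
\[
\frac{g}{N} + \frac{tg}{N(g+k)} = \frac{g(g+k+t)}{N(g+k)} = \frac{g}{g+k},
\]
which holds because $g+t+k = N$.

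The carry specialisation then follows by substituting $t=p$ and $N=p^2$ from Section~\ref{sec:carry}, which gives $t/N = 1/p$. This matches the second eigenvalue $\mu=t/N$ of $M$; I would check $\mu$ directly from $\Tr(M) = 1 + t/N$, which forces the eigenvalues to be $1$ and $t/N$. Combined with Proposition~\ref{prop:autocorrelation}, this gives $\mathrm{Corr}(\sigma_j,\sigma_{j+m}) = p^{-m}$.

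The main obstacle is one of interpretation. The phrase ``stationary regime'' must be made precise, since the chain starts at $\sigma_0=0$ and not at $\pi$. I would state the result for the chain started in $\pi$, where the identities are exact. For the deterministic start $\sigma_0=0$, I would bound the correction using $\Pr(\sigma_j=1) = \pi(1)(1-\mu^j)$, which follows from the eigen-decomposition of $M$. Summing gives an $O(1/(1-\mu))$ correction, so the identities hold asymptotically as $L\to\infty$ in the form $\lim_{L\to\infty}\mathrm{E}[P^*]/L$.
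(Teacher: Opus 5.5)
Your proof is correct and follows essentially the same route as the paper. Both arguments write $P^*$ as a sum of indicators, use the independence of $x_j$ from $\sigma_{j-1}$ together with $\sigma_{j-1}\sim\pi$ to get $(t/N)\,\pi_1$ per position, and then sum and divide by $\mathrm{E}[\nu]=L\pi_1$. Your additions are sound refinements of that same argument: justifying independence via $\sigma_{j-1}$ being a function of $\sigma_0,x_1,\dots,x_{j-1}$, the check $\mathrm{E}[G]+\mathrm{E}[P^*]=L\pi_1$, the trace computation giving $\mu=t/N$, and the remark on the transient start $\sigma_0=0$.
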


\begin{proof}
In the stationary regime, $\sigma_k$ is Bernoulli with parameter~$\pi_1 = g/(g{+}k)$, so $\mathrm{E}[\sigma_k = 1] = L\pi_1$. The propagated-state count is $P^* = \sum_{k=1}^L \mathbf{1}[x_k \in \PROP,\, \sigma_{k-1} = 1]$. At each position, the symbol $x_k$ is drawn independently from~$X$, while $\sigma_{k-1}$ is determined by the Markov chain. Since the event $\{x_k \in \PROP\}$ depends only on $x_k$ and is independent of~$\sigma_{k-1}$ conditional on the chain:
\[
\mathrm{P}(x_k \in \PROP)\cdot\mathrm{P}(\sigma_{k-1} = 1) = \frac{t}{N}\cdot\pi_1.
\]
Summing over $k = 1,\ldots,L$:
\[
\mathrm{E}[P^*] = L\cdot\frac{t\pi_1}{N} = L\cdot\frac{tg}{N(g{+}k)}.
\]
The ratio follows:
\[
\frac{\mathrm{E}[P^*]}{\mathrm{E}[\sigma {=} 1]}
= \frac{L\cdot tg/\bigl(N(g{+}k)\bigr)}{L\cdot g/(g{+}k)}
= \frac{t}{N}. \qedhere
\]
\end{proof}

\subsection{Variance of the state count}\label{sec:variance}

Let $\nu = \sum_{k=1}^L \sigma_k$ denote the total number of positions with state~$1$.

\subsubsection*{Stationary regime}

\begin{proposition}[Stationary variance]\label{prop:var-stat}
Let $\sigma_0 \sim \pi$. Then
\begin{equation}\label{eq:var-stat}
\mathrm{Var}(\nu) = \pi_1\pi_0\biggl[\frac{L(1{+}\mu)}{1{-}\mu} - \frac{2\mu(1{-}\mu^L)}{(1{-}\mu)^2}\biggr].
\end{equation}
\end{proposition}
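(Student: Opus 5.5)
The plan is to expand the variance of the sum into diagonal and off-diagonal covariance terms, use stationarity together with the autocorrelation formula of Proposition~\ref{prop:autocorrelation}, and then evaluate the resulting geometric sums in closed form. Since $\sigma_0 \sim \pi$ and $\pi$ is stationary for $M$, every $\sigma_j$ with $j\ge 1$ is Bernoulli with parameter $\pi_1$. Hence $\mathrm{Var}(\sigma_j) = \pi_0\pi_1$, and
\[
\mathrm{Var}(\nu) = \sum_{j=1}^L \mathrm{Var}(\sigma_j) + 2\sum_{1\le i<j\le L} \mathrm{Cov}(\sigma_i,\sigma_j).
\]

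For the covariances I would argue directly from the transition matrix rather than only quoting Proposition~\ref{prop:autocorrelation}, since the spectral argument is the only real content. The matrix $M$ has eigenvalues $1$ and $\mu=\Tr(M)-1 = (t+k+g+t)/N - 1 = t/N$. Write $M^m = \mathbf{1}\pi^{\mathsf T} + \mu^m(I - \mathbf{1}\pi^{\mathsf T})$, which holds because $M$ is $2\times 2$ with these eigenvalues. This gives
\[
\mathrm{P}(\sigma_{i+m}=1\mid\sigma_i=1) = \pi_1 + \mu^m\pi_0 .
\]
Therefore $\mathrm{Cov}(\sigma_i,\sigma_{i+m}) = \pi_1(\pi_1+\mu^m\pi_0) - \pi_1^2 = \pi_0\pi_1\mu^m$, which matches Proposition~\ref{prop:autocorrelation} because the variance is $\pi_0\pi_1$.

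Next I group the pairs by lag: there are $L-m$ pairs at lag $m$. This gives
\[
\mathrm{Var}(\nu) = \pi_0\pi_1\Bigl[L + 2\sum_{m=1}^{L-1}(L-m)\mu^m\Bigr].
\]
The remaining step, and the only place where an error could slip in, is the closed form of $S=\sum_{m=1}^{L-1}(L-m)\mu^m$. I would compute it as the double sum $\sum_{j=1}^{L}\sum_{m=1}^{j-1}\mu^m = \sum_{j=1}^{L}\frac{\mu-\mu^{j}}{1-\mu}$, which gives
\[
S = \frac{L\mu}{1-\mu} - \frac{\mu(1-\mu^L)}{(1-\mu)^2}.
\]
Then
\[
L + 2S = \frac{L(1+\mu)}{1-\mu} - \frac{2\mu(1-\mu^L)}{(1-\mu)^2},
\]
which is \eqref{eq:var-stat}.

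As a check I would verify the formula for $L=1$ (it should give $\pi_0\pi_1$) and for $L=2$ (it should give $\pi_0\pi_1(2+2\mu)$).

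Finally, a degenerate case needs a remark. The computation requires $\mu\neq 1$, i.e.\ $t<N$. If $t=N$, every symbol is \PROP, so $\sigma$ is constant and $\mathrm{Var}(\nu)=L^2\pi_0\pi_1$, which is the $\mu\to1$ limit of the formula. This case is anyway degenerate, since $\pi$ is not determined when $g=k=0$.
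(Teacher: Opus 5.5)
Your proposal is correct and follows the paper's proof. Like the paper, you split $\mathrm{Var}(\nu)$ into $L\pi_0\pi_1$ plus twice the lag-$m$ covariances $\pi_0\pi_1\mu^m$, count $L-m$ pairs per lag, and evaluate $\sum_{m=1}^{L-1}(L-m)\mu^m$ in closed form; deriving the covariance from $M^m = \mathbf{1}\pi^{\mathsf T} + \mu^m(I-\mathbf{1}\pi^{\mathsf T})$ instead of citing it, and adding the $\mu = 1$ remark, are sound extras the paper omits.
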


\begin{proof}
Each $\sigma_k$ is Bernoulli with parameter~$\pi_1$, so $\mathrm{Var}(\sigma_k) = \pi_1\pi_0$. The autocovariance is $\mathrm{Cov}(\sigma_j,\sigma_k) = \pi_1\pi_0\mu^{|k-j|}$ (see~\cite{kemeny-snell}). Expanding the variance of the sum:
\[
\mathrm{Var}(\nu) = \sum_{k=1}^L \mathrm{Var}(\sigma_k) + 2\sum_{1 \le j<k \le L}\mathrm{Cov}(\sigma_j,\sigma_k)
= \pi_1\pi_0\Bigl[L + 2\sum_{1 \le j<k \le L}\mu^{k-j}\Bigr].
\]
The double sum evaluates to $\sum_{m=1}^{L-1}(L-m)\mu^m = L\mu/(1{-}\mu) - \mu(1{-}\mu^L)/(1{-}\mu)^2$ via standard geometric series manipulations, yielding~\eqref{eq:var-stat}.
\end{proof}

\begin{corollary}[Asymptotic dispersion index]\label{cor:dispersion}
\[
D_\infty = \frac{\pi_0(1{+}\mu)}{1{-}\mu} = \frac{k(g{+}k{+}2t)}{(g{+}k)^2}.
\]
\end{corollary}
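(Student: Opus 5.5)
From Proposition~\ref{prop:var-stat} I will take the limit $L\to\infty$ of $D=\mathrm{Var}(\nu)/\mathrm{E}[\nu]$, and then rewrite the result in terms of $g,t,k$. The proof has two parts: the limit, and the algebraic simplification.

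\emph{Step 1 (limit).} In the stationary regime each $\sigma_k$ is Bernoulli with parameter $\pi_1$, so $\mathrm{E}[\nu]=L\pi_1$. Divide \eqref{eq:var-stat} by $L\pi_1$. The bracket's first term gives $\pi_0(1+\mu)/(1-\mu)$, independent of $L$. The second term is bounded by $2\mu/(1-\mu)^2$, because $0\le\mu<1$, and so after division by $L$ it contributes $O(1/L)$. Here $\mu=t/N<1$ needs $g+k>0$, which I will assume; this is implicit, since otherwise $\pi$ is undefined. Hence $D_\infty=\pi_0(1+\mu)/(1-\mu)$.

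\emph{Step 2 (simplification).} Substitute $\pi_0=k/(g+k)$ from Proposition~\ref{prop:stationary} and $\mu=t/N$. Using $N=g+t+k$, we get $1+\mu=(N+t)/N=(g+k+2t)/N$ and $1-\mu=(N-t)/N=(g+k)/N$. The factors of $N$ cancel, leaving
\[
D_\infty=\frac{k}{g+k}\cdot\frac{g+k+2t}{g+k}=\frac{k(g+k+2t)}{(g+k)^2}.
\]

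The only point requiring care is Step 1: the $O(1/L)$ bound on the correction term, together with the requirement $\mu<1$ so that the geometric sums converge. If one instead starts from a non-stationary initial condition $\sigma_0=0$, the expectation and covariances differ only by terms decaying geometrically in the distance from the origin. These changes alter $\mathrm{Var}(\nu)$ and $\mathrm{E}[\nu]$ by $O(1)$, so the same limit results.
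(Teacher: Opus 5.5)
Your argument is correct and matches the paper: the paper gives no separate proof and treats the corollary as immediate from Proposition~\ref{prop:var-stat}, by dividing by $\mathrm{E}[\nu]=L\pi_1$, letting $L\to\infty$, and substituting $\pi_0=k/(g+k)$ and $\mu=t/N$, which is exactly your Steps 1 and 2. Strictly, dividing by $L\pi_1$ also needs $g>0$ and not just $g+k>0$, but that condition is already part of the definition of $D$, which assumes $\mathrm{E}[\nu]>0$.
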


\subsubsection*{Transient regime ($\sigma_0 = 0$)}

\begin{proposition}\label{prop:transient}
Starting from $\sigma_0 = 0$:

\emph{(a)} $\mathrm{P}(\sigma_k = 1) = \pi_1(1 - \mu^k)$.

\emph{(b)} $\mathrm{E}[\nu] = \pi_1[L - \mu(1{-}\mu^L)/(1{-}\mu)]$.

\emph{(c)} For $1 \le j < k \le L$: $\mathrm{Cov}(\sigma_j,\sigma_k) = \pi_1(1{-}\mu^j)\mu^{k-j}(\pi_0 + \pi_1\mu^j)$.
\end{proposition}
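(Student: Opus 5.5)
The plan is to work directly with the two-state Markov chain $M$ and its spectral decomposition. Since $M$ is a $2\times 2$ stochastic matrix with eigenvalues $1$ and $\mu=t/N$, its $n$-step transition probabilities have the standard form
\[
\mathrm{P}(\sigma_{n}=1\mid\sigma_0=0)=\pi_1(1-\mu^n),\qquad
\mathrm{P}(\sigma_{n}=1\mid\sigma_0=1)=\pi_1+\pi_0\mu^n.
\]
To verify this, write $M^n = \mathbf{1}\pi + \mu^n(I-\mathbf{1}\pi)$. This holds because $M-\mathbf{1}\pi$ has rank one and eigenvalue $\mu$, and because $\pi M=\pi$ with $\pi=(k/(g{+}k),g/(g{+}k))$ from Proposition~\ref{prop:stationary}. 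It is worth checking once that $1-\mu = (g+k)/N$ is the off-diagonal sum $g/N+k/N$, which confirms that the second eigenvalue is indeed $\mu$. Part~(a) is then the first formula with $n=k$.

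Part~(b) follows by summing (a) over $k=1,\dots,L$. Linearity of expectation gives
\[
\mathrm{E}[\nu]=\pi_1\sum_{k=1}^L(1-\mu^k)=\pi_1\bigl[L-\mu(1-\mu^L)/(1-\mu)\bigr],
\]
which is a finite geometric series.

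For part~(c), use the Markov property at time $j$:
\[
\mathrm{E}[\sigma_j\sigma_k]=\mathrm{P}(\sigma_j=1)\,\mathrm{P}(\sigma_k=1\mid\sigma_j=1)=\pi_1(1-\mu^j)\bigl(\pi_1+\pi_0\mu^{k-j}\bigr).
\]
Subtract $\mathrm{P}(\sigma_j=1)\mathrm{P}(\sigma_k=1)=\pi_1^2(1-\mu^j)(1-\mu^k)$ and factor out $\pi_1(1-\mu^j)$. What remains is
\[
\pi_1+\pi_0\mu^{k-j}-\pi_1+\pi_1\mu^k=\mu^{k-j}(\pi_0+\pi_1\mu^j),
\]
which is the claimed expression.

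There is no real obstacle. The only points needing care are the closed form for $M^n$, which relies on $\pi_0+\pi_1=1$ and on the eigenvalue computation $\Tr M = 1+\mu$, and keeping the indices straight in the final algebraic simplification of~(c).
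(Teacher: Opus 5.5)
Your proposal is correct and follows essentially the same route as the paper: the decomposition $M^n = \mathbf{1}\pi + \mu^n(I-\mathbf{1}\pi)$ (the paper's $\Pi + \mu^n R$) yields $(M^n)_{01}=\pi_1(1-\mu^n)$ and $(M^n)_{11}=\pi_1+\pi_0\mu^n$. From there, (a) is immediate, (b) is a geometric sum, and (c) uses the Markov property at time $j$ with the same factorisation of $\pi_1(1-\mu^j)$. Your explicit check that the second eigenvalue is $1-(g+k)/N=\mu$, which justifies the closed form for $M^n$, is a little more careful than the paper's sketch but does not change the argument.
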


\begin{proof}
The transition matrix~$M$ has eigenvalues~$1$ and~$\mu = t/N$ with spectral decomposition $M = \Pi + \mu\,R$, where $\Pi_{ij} = \pi_j$ and $R$ is the rank-one residual. This yields the matrix powers $(M^n)_{01} = \pi_1(1 - \mu^n)$ and $(M^n)_{11} = \pi_1 + \pi_0\mu^n$.

\emph{Part~(a).} $\mathrm{P}(\sigma_k = 1 \mid \sigma_0 = 0) = (M^k)_{01} = \pi_1(1 - \mu^k)$.

\emph{Part~(b).} Setting $p_k = \pi_1(1 - \mu^k)$:
\[
\mathrm{E}[\nu] = \sum_{k=1}^L p_k
= \pi_1\biggl[L - \sum_{k=1}^L \mu^k\biggr]
= \pi_1\biggl[L - \frac{\mu(1{-}\mu^L)}{1{-}\mu}\biggr].
\]

\emph{Part~(c).} By the Markov property, the joint probability factorises as
\[
\mathrm{P}(\sigma_j {=} 1,\, \sigma_k {=} 1 \mid \sigma_0 {=} 0)
= \mathrm{P}(\sigma_j {=} 1 \mid \sigma_0 {=} 0)\cdot
  \mathrm{P}(\sigma_k {=} 1 \mid \sigma_j {=} 1)
= \pi_1(1{-}\mu^j)(\pi_1 + \pi_0\mu^{k-j}).
\]
Subtracting the product of marginals $\mathrm{E}[\sigma_j]\,\mathrm{E}[\sigma_k] = \pi_1^2(1{-}\mu^j)(1{-}\mu^k)$:
\begin{align*}
\mathrm{Cov}(\sigma_j,\sigma_k)
&= \pi_1(1{-}\mu^j)\bigl[\pi_1 + \pi_0\mu^{k-j} - \pi_1(1{-}\mu^k)\bigr] \\
&= \pi_1(1{-}\mu^j)\bigl[\pi_0\mu^{k-j} + \pi_1\mu^k\bigr]
= \pi_1(1{-}\mu^j)\mu^{k-j}(\pi_0 + \pi_1\mu^j). \qedhere
\end{align*}
\end{proof}

\begin{theorem}[Asymptotic equivalence]\label{thm:asymp-equiv}
For $L \to \infty$, $D_{\mathrm{trans}}(L) = D_\infty + O(1/L)$, with the same limit as the stationary case.
\end{theorem}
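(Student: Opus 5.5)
We will compute $\mathrm{E}[\nu]$ and $\mathrm{Var}(\nu)$ explicitly in the transient regime from Proposition~\ref{prop:transient}, write each as $L\cdot(\text{leading constant}) + O(1)$, and take the ratio. Throughout, $0\le\mu<1$ is fixed (assume $t<N$), so every sum of the form $\sum_{m\ge 1}\mu^m$ or $\sum_j \mu^j$ converges. We also assume $g>0$, so that $\pi_1>0$; the case $g=0$ gives $\nu\equiv 0$ and is excluded since $D$ is undefined there.

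\textbf{Mean.} From Proposition~\ref{prop:transient}(b), $\mathrm{E}[\nu]=\pi_1 L - \pi_1\mu(1-\mu^L)/(1-\mu)$. This is $\pi_1 L + O(1)$, with the $O(1)$ term bounded by $\pi_1\mu/(1-\mu)$ uniformly in $L$.

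\textbf{Variance.} We write
\[
\mathrm{Var}(\nu)=\sum_{k=1}^L p_k(1-p_k)+2\sum_{1\le j<k\le L}\mathrm{Cov}(\sigma_j,\sigma_k),
\]
with $p_k=\pi_1(1-\mu^k)$. We compare each term with its stationary counterpart from Proposition~\ref{prop:var-stat}.

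For the diagonal, $p_k(1-p_k)-\pi_1\pi_0 = O(\mu^k)$, so the difference summed over $k$ is $O(1)$.

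For the off-diagonal terms, Proposition~\ref{prop:transient}(c) gives
\[
\mathrm{Cov}(\sigma_j,\sigma_k)=\pi_1\pi_0\mu^{k-j}+\mu^{k-j}\,\varepsilon_j, \qquad |\varepsilon_j|\le C\mu^j
\]
for a constant $C$. This holds because $\pi_1(1-\mu^j)(\pi_0+\pi_1\mu^j)-\pi_1\pi_0$ is a polynomial in $\mu^j$ with no constant term. The error contribution is then bounded by $\sum_j C\mu^j\sum_{m\ge1}\mu^m = O(1)$.

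Hence $\mathrm{Var}(\nu)=\mathrm{Var}_{\mathrm{stat}}(\nu)+O(1)$. By Proposition~\ref{prop:var-stat}, $\mathrm{Var}_{\mathrm{stat}}(\nu)=\pi_1\pi_0\,L(1+\mu)/(1-\mu)+O(1)$.

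\textbf{Ratio.} Writing $A=\pi_1\pi_0(1+\mu)/(1-\mu)$ and $B=\pi_1$, we have
\[
D_{\mathrm{trans}}=\frac{AL+O(1)}{BL+O(1)}=\frac{A}{B}+O(1/L),
\]
and $A/B=\pi_0(1+\mu)/(1-\mu)=D_\infty$ by Corollary~\ref{cor:dispersion}. The stationary ratio has the same form, so the two limits agree.

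The main obstacle is bookkeeping the double-sum error uniformly in $L$. The point is to factor $\mu^{k-j}\cdot\mu^j$ so that the geometric bounds decouple; once this is done, no explicit closed form for the transient variance is needed.
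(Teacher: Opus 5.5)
Your proof is correct and follows the same route as the paper. Both show that the transient mean and variance differ from their stationary counterparts by $O(1)$, then take the ratio using $\mathrm{E}[\nu]=\Theta(L)$. Your version is tighter than the paper's sketch: you also bound the diagonal terms $p_k(1-p_k)-\pi_1\pi_0=O(\mu^k)$, and you replace the heuristic ``deviation concentrated at $j\lesssim\xi$'' with the explicit additive bound $\sum_j C\mu^j\sum_{m\ge1}\mu^m=O(1)$, which also avoids dividing by $\pi_0$.
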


\begin{proof}
We show that the transient corrections to both the mean and the variance are bounded, so the ratio converges to the stationary value.

In both regimes $\mathrm{E}[\nu] = \pi_1 L + O(1)$: the transient correction is $-\pi_1\mu(1{-}\mu^L)/(1{-}\mu) = O(1)$. Each transient covariance differs from the stationary value $\pi_1\pi_0\mu^{|k-j|}$ by the factor $(1{-}\mu^j)(\pi_0 + \pi_1\mu^j)/\pi_0$, which converges to~$1$ exponentially as $j \to \infty$. The deviation from the stationary covariance is concentrated at positions $j \lesssim \xi$, where $\xi = 1/\ln(N/t)$ is the correlation length, contributing $O(1)$ to the total variance. Since $\mathrm{Var}_{\mathrm{trans}}(\nu) = \mathrm{Var}_{\mathrm{stat}}(\nu) + O(1)$ and $\mathrm{E}[\nu] = \Theta(L)$,
\[
D_{\mathrm{trans}}(L) = \frac{\mathrm{Var}_{\mathrm{stat}} + O(1)}{\mathrm{E}_{\mathrm{stat}} + O(1)}
= D_\infty + O(1/L). \qedhere
\]
\end{proof}

\begin{question}\label{q:coupling-probabilistic}
Does the coupling parameter $x = N/(2\sqrt{\mathfrak{d}})$ admit a probabilistic interpretation, for instance as a channel capacity of the binary asymmetric channel with transition probabilities $g/N$ and $k/N$?
\end{question}

\section{The Poisson transition and monotonicity}\label{sec:poisson}

\subsection{The Poisson condition}

\begin{theorem}[Poisson condition]\label{thm:poisson}
$D_\infty = 1$ if and only if
\begin{equation}\label{eq:poisson}
\pi_1 = \frac{2\mu}{1{+}\mu}, \qquad\text{equivalently } 2kt = g(g{+}k).
\end{equation}
\end{theorem}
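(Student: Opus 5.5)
The plan is to start from the closed form of the asymptotic dispersion index in Corollary~\ref{cor:dispersion}, $D_\infty = \pi_0(1{+}\mu)/(1{-}\mu)$. Setting this equal to~$1$ and solving for $\pi_0$ gives the first form of~\eqref{eq:poisson}. Substituting the explicit parameters then gives the second form.

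First, I would solve $D_\infty=1$ for $\pi_0$. Since $0\le\mu<1$, we can write $\mu = t/N<1$ provided $g+k>0$, which is needed anyway for the stationary distribution of Proposition~\ref{prop:stationary} to be defined. Then $D_\infty=1$ holds if and only if $\pi_0(1{+}\mu) = 1{-}\mu$, that is, $\pi_0=(1{-}\mu)/(1{+}\mu)$. Using $\pi_1 = 1-\pi_0$, this is equivalent to $\pi_1 = 1-(1{-}\mu)/(1{+}\mu) = 2\mu/(1{+}\mu)$. Each step is reversible because $1+\mu>0$, so we obtain the stated "if and only if".

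Second, I would translate this into the integer parameters. Insert $\pi_1 = g/(g{+}k)$ from Proposition~\ref{prop:stationary} and $\mu=t/N$, which gives $2\mu/(1{+}\mu) = 2t/(N{+}t)$. The condition becomes $g(N{+}t) = 2t(g{+}k)$. Now substitute $N = g+t+k$. The left side becomes $g(g+k+2t)$, so the equation reads $g(g{+}k) + 2gt = 2gt + 2kt$, which simplifies to $g(g{+}k) = 2kt$. As a consistency check, the second expression in Corollary~\ref{cor:dispersion} gives the same result: $k(g{+}k{+}2t) = (g{+}k)^2$ is equivalent to $2kt = (g{+}k)^2 - k(g{+}k) = g(g{+}k)$.

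The only real obstacle is bookkeeping about degenerate cases. The first is $g+k=0$, where everything is \PROP{}, $\mu=1$, and $D_\infty$ is undefined. The second is $g=0$, where $\mathrm{E}[\nu]=0$. I would state these exclusions explicitly, note that the equivalence then consists purely of reversible algebra, and confirm that no division by zero occurs.
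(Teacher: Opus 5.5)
Your proposal is correct and follows the paper's proof. Like the paper, you set $D_\infty = 1$ in Corollary~\ref{cor:dispersion}, rearrange to $\pi_1 = 2\mu/(1+\mu)$, and substitute $\pi_1 = g/(g+k)$ and $\mu = t/(g+t+k)$; you also write out the algebra the paper compresses into ``simplify'' (via $g(N+t) = 2t(g+k)$) and state the assumption $g+k>0$ that guarantees $\mu<1$, and both are right.
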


\begin{proof}
Setting $D_\infty = 1$ in Corollary~\ref{cor:dispersion}: $\pi_0(1{+}\mu)/(1{-}\mu) = 1$. Rearranging, $(1{-}\pi_1)(1{+}\mu) = 1{-}\mu$, which gives $\pi_1 = 2\mu/(1{+}\mu)$. To obtain the algebraic form, substitute $\pi_1 = g/(g{+}k)$ and $\mu = t/N = t/(g{+}t{+}k)$, then simplify.
\end{proof}

The Poisson curve $\pi_1 = 2\mu/(1{+}\mu)$ divides the parameter space into an overdispersed region (below: state entries cluster) and an underdispersed region (above: state entries are more regular than Poisson).

For symmetric chains ($g = k$, $\pi_1 = 1/2$), the Poisson condition simplifies to $\mu = 1/3$.

\begin{example}[Carry propagation in addition]\label{ex:carry-dispersion}
For carry propagation, $\pi_1 = 1/2$ and $\mu = 1/p$, which gives
\[
D_\infty = \frac{p{+}1}{2(p{-}1)}.
\]
The Poisson transition $\mu = 1/3$ corresponds to $p = 3$---the same base where the Fibonacci bisection appears.
\end{example}

\begin{table}[ht]
\centering
\caption{Dispersion index for carry propagation in addition.}
\label{tab:dispersion}
\begin{tabular}{cccl}
\toprule
$p$ & $\mu = 1/p$ & $D_\infty$ & Regime \\
\midrule
2 & 1/2 & 3/2 & overdispersed \\
3 & 1/3 & 1 & Poisson \\
5 & 1/5 & 3/4 & underdispersed \\
7 & 1/7 & 2/3 & underdispersed \\
\bottomrule
\end{tabular}
\end{table}

\begin{theorem}[Poisson transition and coupling parameter]\label{thm:poisson-coupling}
For symmetric chains, the Poisson transition lies at $\mu = 1/3$ with coupling parameter
\[
x_{\mathrm{Poisson}} = \frac{3}{2},
\]
independent of~$N$. This is exactly the Chebyshev point where the Fibonacci bisection arises.
\end{theorem}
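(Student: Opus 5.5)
For symmetric chains we have $g = k$, so $\pi_1 = \pi_0 = 1/2$. I would start from the Poisson condition of Theorem~\ref{thm:poisson}. With $\pi_1 = 1/2$ the equation $\pi_1 = 2\mu/(1{+}\mu)$ becomes $1{+}\mu = 4\mu$, hence $\mu = 1/3$. The same conclusion follows from the algebraic form $2kt = g(g{+}k)$: with $g = k$ it reads $2gt = 2g^2$, so $t = g = k$ provided $g > 0$. I would note that $g > 0$ is required for $\pi$ to be defined, so we may divide by~$g$.

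Next I would translate the transition into the coupling parameter. At the transition $g = t = k$, so $N = 3g$ and $\mathfrak{d} = gt = g^2$. Then
\[
x = \frac{N}{2\sqrt{\mathfrak{d}}} = \frac{3g}{2g} = \frac{3}{2}.
\]
The value of $g$, and hence of $N = 3g$, cancels, which gives independence of~$N$. One can also argue without fixing $t = g$ first. For a symmetric chain with $\mu = t/N$ we have $g = k = N(1{-}\mu)/2$, so
\[
x^2 = \frac{N^2}{4gt} = \frac{1}{2\mu(1{-}\mu)}.
\]
Substituting $\mu = 1/3$ gives $x^2 = 9/4$. This form also shows that $x$ depends only on $\mu$ along the symmetric family.

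Finally I would identify $x = 3/2$ with the Fibonacci point. By Theorem~\ref{thm:fibonacci}, base $p = 3$ doubling has $\mathfrak{d} = 1$, $N = 3$ and $x = 3/2$, and its counts are $U_L(3/2) = F(2L{+}2)$. By Theorem~\ref{thm:chebyshev}, any symmetric Poisson-point operation has $a(L) = g^L\,U_L(3/2) = g^L F(2L{+}2)$. Its normalized count is therefore exactly the Fibonacci bisection. Carry propagation in base~$3$ is the instance $g = t = k = 3$, consistent with Example~\ref{ex:carry-dispersion}.

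The only delicate point is bookkeeping rather than an obstacle. I need to make sure the Poisson condition is applied with the stationary $\pi_1$ of Proposition~\ref{prop:stationary} and $\mu = t/N$. I also need to exclude the degenerate cases $g = 0$ and $\mu = 1$, where $D_\infty$ is undefined.
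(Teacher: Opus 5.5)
Your proof is correct and follows the paper's argument: with $g=k$ the Poisson condition gives $\mu = 1/3$, hence $g=t=k=N/3$ and $x = N/(2\sqrt{gt}) = 3/2$, with $N$ cancelling. Your additional formula $x^2 = 1/(2\mu(1{-}\mu))$ and the explicit $a(L) = g^L F(2L{+}2)$ via Theorem~\ref{thm:chebyshev} are valid elaborations of what the paper leaves implicit; note that this formula is symmetric under $\mu \mapsto 1-\mu$, so only the implication Poisson $\Rightarrow x = 3/2$ holds, which is all the statement claims.
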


\begin{proof}
For symmetric chains, $g = k$ forces $\pi_1 = 1/2$. The Poisson condition $\mu = 1/3$ (from Theorem~\ref{thm:poisson}) forces $t/N = 1/3$, hence $g/N = k/N = 1/3$. The coupling parameter is then $x = N/(2\sqrt{gt}) = N/(2\sqrt{(N/3)(N/3)}) = N/(2N/3) = 3/2$, independent of~$N$.
\end{proof}

\subsection{The finite Poisson transition point}\label{sec:monotonicity}

For symmetric chains ($g = k$) with $\sigma_0 = 0$, define the finite Poisson transition point $\mu^*(L)$ as the unique solution of $D_{\mathrm{trans}}(L, \mu) = 1$.

\begin{theorem}[Monotonicity]\label{thm:monotonicity}
For symmetric chains:

\emph{(a)} $D_{\mathrm{trans}}(L, \mu)$ is strictly increasing in~$L$ for each fixed $\mu \in (0,1)$.

\emph{(b)} $D_{\mathrm{trans}}(L, \mu)$ is strictly increasing in~$\mu$ for each fixed $L \ge 1$.

\emph{(c)} $\mu^*(L)$ is strictly decreasing in~$L$ with $\mu^*(L) = 1/3 + 1/(6L) + O(1/L^2)$.
\end{theorem}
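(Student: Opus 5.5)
The plan is to specialise Proposition~\ref{prop:transient} to the symmetric case $\pi_0=\pi_1=1/2$ and get an explicit rational expression for $D_{\mathrm{trans}}(L,\mu)$. Then (a) and (b) are sign statements about that expression, and (c) follows from them plus an asymptotic expansion.

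\emph{Setup.} With $p_k=(1-\mu^k)/2$ we get $\mathrm{Var}(\sigma_k)=p_k(1-p_k)=(1-\mu^{2k})/4$. Proposition~\ref{prop:transient}(c) gives $\mathrm{Cov}(\sigma_j,\sigma_k)=(1-\mu^{2j})\mu^{k-j}/4$. Set $V_L:=4\,\mathrm{Var}(\nu)$ and $E_L:=2\,\mathrm{E}[\nu]$. Then
\[
V_L=\sum_{k=1}^L(1-\mu^{2k})+2\sum_{1\le j<k\le L}(1-\mu^{2j})\mu^{k-j},\qquad E_L=\sum_{k=1}^L(1-\mu^k),
\]
and $D_{\mathrm{trans}}(L,\mu)=V_L/(2E_L)$. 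Both sums are finite geometric sums, so they have closed forms. They also decompose into increments:
\[
v_m:=V_m-V_{m-1}=(1-\mu^{2m})+2\sum_{j<m}(1-\mu^{2j})\mu^{m-j},\qquad e_m:=E_m-E_{m-1}=1-\mu^m>0 .
\]

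\emph{Part (a).} I will use a mediant argument. Since $D(L)=\sum_{m\le L}v_m/\bigl(2\sum_{m\le L}e_m\bigr)$, it is a weighted average of the ratios $r_m=v_m/(2e_m)$ with positive weights $e_m$. If $r_m$ is strictly increasing in $m$, then every new term exceeds the current average, so $D(L+1)>D(L)$. It therefore suffices to show $r_{m+1}>r_m$. I would cross-multiply and write $v_{m+1}e_m-v_me_{m+1}$ in closed form. The hope is that it factors as $\mu^m(1-\mu)\cdot(\text{something positive on }(0,1))$. This is where I expect the main obstacle: the cross-multiplied polynomial has mixed signs. My first attempt is to expand in powers of $\mu$ and check that the coefficients are nonnegative. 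Failing that, I would bound $v_m$ using the recursion $v_{m+1}=\mu v_m+(1-\mu^{2m+2})+\mu(1-\mu^{2m})-\mu(1-\mu^{2m})\cdot 0$, i.e.\ the convolution structure of $v_m$.

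\emph{Part (b).} I would show $\partial_\mu(V_L/E_L)>0$, i.e.\ $V_L'E_L-V_LE_L'>0$. Both $V_L$ and $E_L$ are polynomials in $\mu$: $E_L$ is decreasing and $V_L$ has a positive linear term, $2(L-1)\mu+\dots$. I would try to show the Wronskian-type polynomial $V_L'E_L-V_LE_L'$ has positive coefficients, checking small $L$ first to see the pattern. As a consistency check, $\mu\to0$ gives $D=1/2$.

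\emph{Part (c).} At $\mu=0$ we have $D=1/2<1$. As $\mu\to1$, $D\to\infty$ for large $L$; for small $L$ the boundary value should be checked separately. By (b), this gives a unique root $\mu^*(L)$. By (a), $D(L+1,\mu^*(L))>1$, and then (b) forces $\mu^*(L+1)<\mu^*(L)$. Theorem~\ref{thm:asymp-equiv} gives $D(L,\mu)\to D_\infty(\mu)=(1+\mu)/(2(1-\mu))$, so $\mu^*(L)\to1/3$, and monotonicity means the convergence is from above. For the rate, I would write
\[
D(L,\mu)=D_\infty(\mu)-c(\mu)/L+O(1/L^2),
\]
uniformly near $1/3$, with $c$ read off from the $O(1)$ corrections in the closed forms of $V_L$ and $E_L$. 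Since $D_\infty'(1/3)=1/(1-\mu)^2=9/4$, the implicit function theorem gives $\mu^*(L)-1/3=\tfrac{4}{9}c(1/3)/L+O(1/L^2)$. The claimed rate $1/(6L)$ therefore amounts to verifying $c(1/3)=3/8$ from the explicit constants.
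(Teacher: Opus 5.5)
\textbf{Parts (a) and (c).} These follow the paper's route; your $r_m$ is the paper's marginal dispersion $d_m$. The obstacle you anticipate in (a) does not arise. Summing the geometric series gives $v_m=\frac{1+\mu}{1-\mu}(1-\mu^m)^2$, hence $r_m=D_\infty(\mu)\,(1-\mu^m)$ with $D_\infty(\mu)=\frac{1+\mu}{2(1-\mu)}$, and
$v_{m+1}e_m-v_me_{m+1}=(1+\mu)\mu^m(1-\mu^m)(1-\mu^{m+1})>0$.

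Your first fallback, expanding and checking for nonnegative coefficients, would fail even here: for $m=1$ the expansion is $1-2\mu^2+\mu^4$. You must factor instead.

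The same identity gives $D=D_\infty(\mu)\sum_k(1-\mu^k)^2/\sum_k(1-\mu^k)$. From this, $c(\mu)=\mu/(2(1-\mu)^2)$, so $c(1/3)=3/8$, as (c) requires.

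Your caution about the boundary in (c) is warranted, but your stated reason is wrong. For every fixed $L$, $D(L,\mu)\to(2L+1)/3$ as $\mu\to1^-$, not $\infty$. So $\mu^*(L)$ exists exactly for $L\ge 2$; for $L=1$, $D=(1+\mu)/2<1$ on all of $(0,1)$. The paper's assertion that $D\to\infty$ has the same defect.

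\textbf{The genuine gap is in (b).} Showing that $V_L'E_L-V_LE_L'$ has positive coefficients cannot work. Both $V_L$ and $E_L$ are sums of terms carrying a factor $1-\mu^{j}$, so $V_L(1)=E_L(1)=0$. Hence this polynomial vanishes at $\mu=1$, whereas a nonzero polynomial with nonnegative coefficients is positive there. Already for $L=1$ it equals $(1-\mu)^2$.

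After cancelling the common factors you get $D=\frac{1+\mu}{2}\sum_m s_m^2/\sum_m s_m$, with $s_m=1+\mu+\dots+\mu^{m-1}$. This does not settle monotonicity: numerator and denominator both increase in $\mu$. Checking small $L$ for a coefficient pattern is also not a proof for all $L$.

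The missing idea, which is the paper's, is to treat $D$ as the mean of the $r_k$ under weights $w_k=1-\mu^k$ and differentiate:
$\partial_\mu D=\langle\partial_\mu r_k\rangle_w+\mathrm{Cov}_w(r_k,\partial_\mu\log w_k)$.
\begin{itemize}
\item The first term is positive, because $r_k=\frac{1+\mu}{2}s_k$ is increasing in $\mu$.
\item The second term is nonnegative by Chebyshev's sum inequality. Both $r_k$ and $\partial_\mu\log w_k=-k\mu^{k-1}/(1-\mu^k)$ are increasing in $k$; for the latter this is equivalent to $\sum_{j=1}^k\mu^j<k$.
\end{itemize}

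Uniqueness of $\mu^*(L)$ and the step $\mu^*(L+1)<\mu^*(L)$ in (c) both rely on (b), so this gap carries over to (c).
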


The proof of this theorem occupies the remainder of this section. The main tool is the \emph{marginal dispersion}, which reduces the monotonicity in~$L$ to a comparison between the current dispersion index and the contribution of the next position.

\subsection{Marginal dispersion}

The \emph{marginal dispersion} $d_{L+1} := \delta_V(L{+}1)/\delta_E(L{+}1)$ measures the dispersion contribution of position $L{+}1$ relative to the expected contribution.

\begin{lemma}[Closed form]\label{lem:marginal}
$d_{L+1} = \tfrac{1}{2}(1 + \mu^{L+1}) + \mu(1 - \mu^L)/(1 - \mu)$.
\end{lemma}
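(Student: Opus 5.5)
The plan is to compute the two increments directly from Proposition~\ref{prop:transient}, specialised to the symmetric case $\pi_0=\pi_1=1/2$. I interpret $\delta_E(L{+}1) = \mathrm{E}[\nu_{L+1}]-\mathrm{E}[\nu_L]$ and $\delta_V(L{+}1)=\mathrm{Var}(\nu_{L+1})-\mathrm{Var}(\nu_L)$, where $\nu_L=\sum_{k=1}^L\sigma_k$ and the chain starts at $\sigma_0=0$.

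For the mean increment, Proposition~\ref{prop:transient}(a) gives
\[
\delta_E(L{+}1)=\mathrm{P}(\sigma_{L+1}=1)=p_{L+1}=\tfrac12(1-\mu^{L+1}).
\]

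For the variance increment, expanding $\mathrm{Var}(\nu_L+\sigma_{L+1})$ gives
\[
\delta_V(L{+}1)=\mathrm{Var}(\sigma_{L+1})+2\sum_{j=1}^{L}\mathrm{Cov}(\sigma_j,\sigma_{L+1}).
\]
The first term is Bernoulli, so it equals $p_{L+1}(1-p_{L+1})=\tfrac14(1-\mu^{L+1})(1+\mu^{L+1})$. Dividing by $\delta_E$ produces exactly $\tfrac12(1+\mu^{L+1})$, the first summand of the claimed formula.

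For the covariance sum, I use Proposition~\ref{prop:transient}(c) with $\pi_0=\pi_1=1/2$. This gives
\[
\mathrm{Cov}(\sigma_j,\sigma_{L+1})=\tfrac14(1-\mu^j)(1+\mu^j)\mu^{L+1-j}=\tfrac14\mu^{L+1-j}(1-\mu^{2j}),
\]
which splits into two geometric sums. The first is $\sum_{j=1}^{L}\mu^{L+1-j}=\mu(1-\mu^L)/(1-\mu)$. The second is $\sum_{j=1}^{L}\mu^{L+1+j}=\mu^{L+2}(1-\mu^L)/(1-\mu)$. Their difference factors as $\mu(1-\mu^L)(1-\mu^{L+1})/(1-\mu)$. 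Multiplying by $2\cdot\tfrac14$ and dividing by $\delta_E=\tfrac12(1-\mu^{L+1})$ cancels the factor $(1-\mu^{L+1})$ and leaves $\mu(1-\mu^L)/(1-\mu)$. Adding the two pieces gives the lemma.

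The only step needing care is this factorisation of the covariance sum. It hinges on symmetry: the factor $(\pi_0+\pi_1\mu^j)$ becomes $\tfrac12(1+\mu^j)$, which pairs with $(1-\mu^j)$ to give $1-\mu^{2j}$. That is what lets the whole sum carry the common factor $1-\mu^{L+1}$ matching $\delta_E$. Without $g=k$ no such cancellation would occur. Everything else is routine geometric-series bookkeeping.
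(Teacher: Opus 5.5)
Your proposal is correct and follows the same route as the paper. Both expand $\delta_V(L{+}1)$ as $\mathrm{Var}(\sigma_{L+1})$ plus twice the covariance sum, insert the symmetric-case formulas from Proposition~\ref{prop:transient}, and divide by $\delta_E(L{+}1)=p_{L+1}$; you additionally carry out the geometric-sum simplification (the $1-\mu^{2j}$ pairing and the cancellation of $1-\mu^{L+1}$), which the paper only asserts.
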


\begin{proof}
For symmetric chains ($g = k$, $\pi_1 = 1/2$), the marginal variance increment at position $L{+}1$ is
\[
\delta_V(L{+}1) = \mathrm{Var}(\sigma_{L+1}) + 2\sum_{j=1}^{L}\mathrm{Cov}(\sigma_j,\sigma_{L+1}).
\]
Using $\mathrm{Var}(\sigma_{L+1}) = p_{L+1}(1 - p_{L+1})$ with $p_k = \tfrac{1}{2}(1 - \mu^k)$, and $\mathrm{Cov}(\sigma_j,\sigma_{L+1}) = \tfrac{1}{2}(1 - \mu^j)\mu^{L+1-j}(\tfrac{1}{2} + \tfrac{1}{2}\mu^j)$ from Proposition~\ref{prop:transient}(c), the marginal expectation increment is $\delta_E(L{+}1) = p_{L+1} = \tfrac{1}{2}(1 - \mu^{L+1})$. Dividing $\delta_V$ by $\delta_E$ and simplifying the geometric sums yields the stated formula.
\end{proof}

\begin{lemma}[Factorisation]\label{lem:factorisation}
$d_k = D_\infty(\mu)\cdot(1 - \mu^k)$.
\end{lemma}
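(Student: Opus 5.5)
The plan is to verify the identity by direct algebra, starting from the closed form of Lemma~\ref{lem:marginal} and the asymptotic dispersion index of Corollary~\ref{cor:dispersion} specialised to symmetric chains. For $g = k$ we have $\pi_0 = \pi_1 = 1/2$, so Corollary~\ref{cor:dispersion} gives
\[
D_\infty(\mu) = \frac{1+\mu}{2(1-\mu)}.
\]
Writing $k = L+1$, Lemma~\ref{lem:marginal} reads $d_k = \tfrac12(1+\mu^k) + (\mu - \mu^k)/(1-\mu)$. This holds for every $k \ge 1$, with $L = 0$ giving the first position.

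Next, put $d_k$ over the common denominator $2(1-\mu)$. The numerator becomes
\[
(1-\mu)(1+\mu^k) + 2\mu - 2\mu^k = 1 + \mu - \mu^k - \mu^{k+1}.
\]
This polynomial factors as $(1+\mu)(1-\mu^k)$. Hence
\[
d_k = \frac{(1+\mu)(1-\mu^k)}{2(1-\mu)} = D_\infty(\mu)\,(1-\mu^k),
\]
which is the claim.

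There is no real obstacle. The only point needing care is that the exponent conventions match. Lemma~\ref{lem:marginal} is stated for position $L+1$, with $\mu^{L+1}$ in the first term and $\mu(1-\mu^L) = \mu - \mu^{L+1}$ in the second. Both therefore become $\mu^k$ after the shift $k = L+1$, so the cancellation in the numerator goes through. As a sanity check, letting $k \to \infty$ recovers $d_k \to D_\infty$, consistent with Theorem~\ref{thm:asymp-equiv}. At $k = 1$ one gets $d_1 = \tfrac12(1+\mu)$, which matches $\mathrm{Var}(\sigma_1)/\mathrm{E}[\sigma_1] = 1 - p_1$ with $p_1 = \tfrac12(1-\mu)$.
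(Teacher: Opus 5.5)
Your proof is correct and takes the same route as the paper. Both specialise $D_\infty$ to $(1+\mu)/(2(1-\mu))$ for $g=k$ and simplify the closed form of Lemma~\ref{lem:marginal}: you use a common denominator and the factorisation $1+\mu-\mu^k-\mu^{k+1}=(1+\mu)(1-\mu^k)$, while the paper collects the constant and $\mu^k$ terms. Your remarks on the index shift $k=L+1$ (with $L=0$ covering $k=1$) and the check $d_1=(1+\mu)/2$ are correct details that the paper leaves implicit.
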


\begin{proof}
Expanding the formula from Lemma~\ref{lem:marginal} and collecting terms:
\[
d_k = \frac{1{+}\mu}{2(1{-}\mu)} - \mu^k \cdot \frac{1{+}\mu}{2(1{-}\mu)} = D_\infty(1 - \mu^k). \qedhere
\]
\end{proof}

\begin{corollary}[Weighted representation]\label{cor:weighted}
$D(L, \mu) = D_\infty(\mu) \cdot \sum_{k=1}^L (1{-}\mu^k)^2 \big/ \sum_{k=1}^L (1{-}\mu^k)$.
\end{corollary}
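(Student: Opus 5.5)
Since $D(L,\mu)=\mathrm{Var}(\nu)/\mathrm{E}[\nu]$ and both numerator and denominator are sums of increments, I would write
\[
\mathrm{E}[\nu]=\sum_{k=1}^L \delta_E(k),\qquad \mathrm{Var}(\nu)=\sum_{k=1}^L \delta_V(k),
\]
where $\delta_V(k)=\mathrm{Var}(\sigma_k)+2\sum_{j<k}\mathrm{Cov}(\sigma_j,\sigma_k)$ is exactly the increment used in Lemma~\ref{lem:marginal}. This telescoping decomposition holds because $\mathrm{Var}(\nu_k)-\mathrm{Var}(\nu_{k-1})$ equals the new variance plus twice the covariances of the new term with all earlier terms. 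Then $\delta_V(k)=d_k\,\delta_E(k)$ by the definition of the marginal dispersion, so
\[
D(L,\mu)=\frac{\sum_{k=1}^L d_k\,\delta_E(k)}{\sum_{k=1}^L \delta_E(k)}.
\]

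Next I substitute the two explicit factors. For symmetric chains, Proposition~\ref{prop:transient}(a) gives $\delta_E(k)=p_k=\tfrac12(1-\mu^k)$. Lemma~\ref{lem:factorisation} gives $d_k=D_\infty(\mu)(1-\mu^k)$, with $D_\infty=(1+\mu)/(2(1-\mu))$ from Corollary~\ref{cor:dispersion} at $\pi_0=1/2$. The common factor $\tfrac12$ cancels between numerator and denominator. The constant $D_\infty(\mu)$ pulls out of the sum, leaving
\[
D(L,\mu)=D_\infty(\mu)\,\frac{\sum_{k=1}^L(1-\mu^k)^2}{\sum_{k=1}^L(1-\mu^k)},
\]
which is the claimed representation.

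The only substantive step is verifying the increment identity carefully, in particular that $\delta_V$ as defined in Lemma~\ref{lem:marginal} is exactly $\mathrm{Var}(\nu_k)-\mathrm{Var}(\nu_{k-1})$ with the transient (not stationary) covariances. That is the standard variance expansion, so I expect no real obstacle beyond this bookkeeping. As a sanity check, at $L=1$ the representation gives $D=D_\infty(1-\mu)=\tfrac12(1+\mu)$. Direct computation gives $\mathrm{Var}(\sigma_1)/\mathrm{E}[\sigma_1]=1-p_1=\tfrac12(1+\mu)$, which agrees.
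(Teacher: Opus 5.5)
Your proof is correct and follows the route the paper intends: the corollary is stated without a separate proof because it is the weighted-mean identity $D(L,\mu)=\sum_k d_k\,\delta_E(k)/\sum_k\delta_E(k)$, with $\delta_E(k)=\tfrac12(1-\mu^k)$, combined with Lemma~\ref{lem:factorisation}; the paper reuses this weighted-mean structure in the proof of Theorem~\ref{thm:monotonicity}(a). The substantive content is in Lemmas~\ref{lem:marginal} and~\ref{lem:factorisation}, which you take as given, and they check out: the transient covariances give $\delta_V(k)=(1+\mu)(1-\mu^k)^2/\bigl(4(1-\mu)\bigr)$.
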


\begin{lemma}\label{lem:monotone-marginal}
$d_{L+1}$ is strictly increasing in~$L$, with $d_{L+2} - d_{L+1} = \mu^{L+1}(1{+}\mu)/2 > 0$ and $d_{L+1} \to D_\infty$ from below.
\end{lemma}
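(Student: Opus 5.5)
The plan is to derive everything from the factorisation in Lemma~\ref{lem:factorisation}, $d_k = D_\infty(\mu)\,(1-\mu^k)$, rather than from the raw closed form of Lemma~\ref{lem:marginal}. Throughout we assume $\mu\in(0,1)$, which holds for symmetric chains with $t\ge 1$ and $g=k\ge 1$. We also use the symmetric-chain value of the asymptotic dispersion index from Corollary~\ref{cor:dispersion}. With $\pi_0=1/2$ it reads
\[
D_\infty(\mu)=\frac{1+\mu}{2(1-\mu)},
\]
which is strictly positive for $\mu\in(0,1)$.

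\emph{Step 1 (the increment).} Subtract consecutive terms of the factorised form:
\[
d_{L+2}-d_{L+1}=D_\infty\bigl[(1-\mu^{L+2})-(1-\mu^{L+1})\bigr]=D_\infty\,\mu^{L+1}(1-\mu).
\]
Substituting $D_\infty=(1+\mu)/(2(1-\mu))$, the factor $1-\mu$ cancels and the increment becomes $\mu^{L+1}(1+\mu)/2$, as claimed. This is strictly positive because $\mu>0$. Hence $d_{L+1}$ is strictly increasing in $L$.

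\emph{Step 2 (limit and approach from below).} Since $0<\mu<1$, we have $\mu^{L+1}\to 0$, so $d_{L+1}=D_\infty(1-\mu^{L+1})\to D_\infty$. Moreover $D_\infty-d_{L+1}=D_\infty\,\mu^{L+1}>0$ for every $L$, so each term lies strictly below the limit. Together with the monotonicity from Step~1, this shows the convergence is from below.

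There is no real obstacle here, since the statement is essentially a corollary of Lemma~\ref{lem:factorisation}. The only points that need care are the correct symmetric value of $D_\infty$, so that the $(1-\mu)$ factor cancels cleanly, and stating explicitly that $\mu\in(0,1)$, so that both positivity claims hold.
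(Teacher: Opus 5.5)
Your proof is correct and matches the derivation the paper leaves implicit: the lemma is stated without proof directly after Lemma~\ref{lem:factorisation}, and the intended argument is to subtract consecutive terms of $d_k = D_\infty(1-\mu^k)$ using the symmetric value $D_\infty = (1+\mu)/(2(1-\mu))$, which gives the increment $\mu^{L+1}(1+\mu)/2$ and the gap $D_\infty\mu^{L+1} > 0$, exactly as you do. Your explicit restriction to $\mu \in (0,1)$ is the right hypothesis, since at $\mu = 0$ the increments vanish and strict monotonicity fails.
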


\subsection{Proof of Theorem~\ref{thm:monotonicity}(a): Monotonicity in $L$}

\begin{proof}
The idea is to show that $D(L) < d_{L+1}$ for all $L \ge 1$, which implies $D(L) < D(L{+}1)$ since $D(L{+}1)$ is a weighted average of $D(L)$ and $d_{L+1}$. We proceed by induction.

\emph{Base case ($L = 1$).} $D(1) = (1{+}\mu)/2$ and $d_2 = (1{+}\mu)^2/2$, so $d_2 - D(1) = (1{+}\mu)\mu/2 > 0$.

\emph{Inductive step.} Suppose $D(L) < d_{L+1}$. The dispersion $D(L{+}1)$ is a convex combination of $D(L)$ and $d_{L+1}$ (weighted by the cumulative and marginal expectations, respectively). Since $D(L) < d_{L+1}$, it follows that $D(L) < D(L{+}1) < d_{L+1}$. By Lemma~\ref{lem:monotone-marginal}, $d_{L+1} < d_{L+2}$, so the inductive hypothesis holds for $L{+}1$.
\end{proof}

\subsection{Proof of Theorem~\ref{thm:monotonicity}(b): Monotonicity in $\mu$}

\begin{proof}
We express $D(L)$ as a weighted mean and differentiate. Writing $D(L) = \langle d_k \rangle_w$ for the weighted mean with weights $w_k \propto 1 - \mu^k$, the product rule gives
\[
\frac{\partial D}{\partial \mu} = \bigl\langle \tfrac{\partial d_k}{\partial \mu} \bigr\rangle_w + \mathrm{Cov}_w(d_k, \rho_k),
\]
where $\rho_k = (\partial w_k/\partial\mu)/w_k$. We show both terms are non-negative.

\begin{lemma}\label{lem:dk-increasing}
$\partial d_k/\partial\mu > 0$ for all $k \ge 1$ and $\mu \in (0,1)$.
\end{lemma}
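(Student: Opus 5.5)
We work from the factorised form in Lemma~\ref{lem:factorisation}, $d_k = D_\infty(\mu)\,(1-\mu^k)$, with $D_\infty(\mu) = (1+\mu)/\bigl(2(1-\mu)\bigr)$ in the symmetric case (Corollary~\ref{cor:dispersion} with $\pi_0 = 1/2$). The derivative of this product splits into two terms, and we show each is positive.

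\emph{Step 1: sign of each factor.} On $(0,1)$ we have $D_\infty(\mu) > 0$ and $1-\mu^k > 0$.

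\emph{Step 2: derivative of $D_\infty$.} Differentiating gives
\[
\frac{\partial D_\infty}{\partial\mu} = \frac{(1-\mu)+(1+\mu)}{2(1-\mu)^2} = \frac{1}{(1-\mu)^2} > 0 .
\]

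\emph{Step 3: derivative of the second factor.} We have $\partial(1-\mu^k)/\partial\mu = -k\mu^{k-1} < 0$. So the two factors pull in opposite directions, and a naive product-rule sign argument fails. This cancellation is the only real obstacle.

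\emph{Step 4: combine the two terms.} Collecting over the common denominator, we compute
\[
\frac{\partial d_k}{\partial\mu} = \frac{1-\mu^k}{(1-\mu)^2} - \frac{(1+\mu)\,k\mu^{k-1}}{2(1-\mu)} .
\]
Multiplying by $2(1-\mu)^2 > 0$, the claim reduces to
\[
2(1-\mu^k) > k\mu^{k-1}(1+\mu)(1-\mu) = k\mu^{k-1}(1-\mu^2).
\]

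\emph{Step 5: the polynomial inequality.} Dividing by $1-\mu > 0$ and writing $1-\mu^k = (1-\mu)\sum_{j=0}^{k-1}\mu^j$, it suffices to show
\[
2\sum_{j=0}^{k-1}\mu^j > k\mu^{k-1}(1+\mu) = k\mu^{k-1} + k\mu^k .
\]
Since $0<\mu<1$, each term satisfies $\mu^j \ge \mu^{k-1}$ for $0 \le j \le k-1$, with strict inequality when $j < k-1$. Hence $\sum_{j=0}^{k-1}\mu^j \ge k\mu^{k-1}$. Likewise $\mu^j > \mu^k$ for every $j \le k-1$, so $\sum_{j=0}^{k-1}\mu^j > k\mu^k$. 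Adding these two bounds gives the strict inequality for all $k \ge 1$.

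For $k=1$ we can check directly: $d_1 = (1+\mu)/2$, whose derivative is $1/2 > 0$, which agrees with Step 5. This completes the plan.
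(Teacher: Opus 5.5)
Your proof is correct and follows the paper's route. You apply the product rule to $d_k = D_\infty(\mu)(1-\mu^k)$ and reduce to the same inequality $\sum_{j=0}^{k-1}\mu^j > \tfrac{k}{2}(\mu^{k-1}+\mu^k)$; the only difference is the last step, where you compare term by term ($\mu^j \ge \mu^{k-1} > \mu^k$) instead of using AM--GM to get $\sum_{j=0}^{k-1}\mu^j \ge k\mu^{(k-1)/2}$, which is a slightly more direct way to close the same bound.
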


\begin{proof}
From $d_k = D_\infty(1 - \mu^k)$ and the factorisation in Lemma~\ref{lem:factorisation}:
\[
\frac{\partial d_k}{\partial\mu} = \frac{1}{1{-}\mu}\biggl[\sum_{j=0}^{k-1}\mu^j - \frac{k}{2}(\mu^{k-1} + \mu^k)\biggr].
\]
By the AM-GM inequality, $\sum_{j=0}^{k-1} \mu^j \ge k\mu^{(k-1)/2}$, and for $\mu \in (0,1)$ we have $\mu^{(k-1)/2} > (\mu^{k-1} + \mu^k)/2$, so the expression in brackets is positive.
\end{proof}

\begin{lemma}\label{lem:covariance}
$\mathrm{Cov}_w(d_k, \rho_k) \ge 0$.
\end{lemma}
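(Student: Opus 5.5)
The plan is to reduce the claim to Chebyshev's sum inequality for a weighted mean. Both $d_k$ and $\rho_k$ turn out to be monotone in $k$ in the same direction, so their covariance under any probability weights is non-negative.

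First I make the weighted covariance explicit. With $w_k = 1-\mu^k$, $k=1,\dots,L$, and $\langle f\rangle_w = \sum_k w_k f_k/\sum_k w_k$, we have $D = \langle d_k\rangle_w$ by Corollary~\ref{cor:weighted} and Lemma~\ref{lem:factorisation}. Differentiating the quotient gives
\[
\frac{\partial D}{\partial\mu} = \Bigl\langle \frac{\partial d_k}{\partial\mu}\Bigr\rangle_w + \langle d_k\rho_k\rangle_w - \langle d_k\rangle_w\langle\rho_k\rangle_w ,
\]
where $\rho_k = (\partial w_k/\partial\mu)/w_k = -k\mu^{k-1}/(1-\mu^k)$. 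So the object to control is $\mathrm{Cov}_w(d_k,\rho_k) = \langle d_k\rho_k\rangle_w - \langle d_k\rangle_w\langle\rho_k\rangle_w$, with $w_k>0$ for $\mu\in(0,1)$.

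Next, monotonicity of $d_k$ in $k$. By Lemma~\ref{lem:factorisation}, $d_k = D_\infty(\mu)(1-\mu^k)$ with $D_\infty>0$, so $d_k$ is strictly increasing in $k$; this is also Lemma~\ref{lem:monotone-marginal}.

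The main step is to show that $\rho_k$ is also increasing in $k$, i.e.\ that $k\mu^{k-1}/(1-\mu^k)$ is decreasing. I would write $1-\mu^k = (1-\mu)\sum_{j=0}^{k-1}\mu^j$ and divide numerator and denominator by $\mu^{k-1}$:
\[
\frac{k\mu^{k-1}}{1-\mu^k} = \frac{1}{1-\mu}\cdot\frac{k}{\sum_{i=0}^{k-1}\mu^{-i}} = \frac{1}{1-\mu}\cdot\frac{1}{A_k},\qquad A_k = \frac1k\sum_{i=0}^{k-1}\mu^{-i}.
\]
Here $A_k$ is the arithmetic mean of the strictly increasing sequence $\mu^{0},\mu^{-1},\dots,\mu^{-(k-1)}$. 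Appending the larger term $\mu^{-k}$ raises the mean, so $A_{k+1}>A_k$. Hence $k\mu^{k-1}/(1-\mu^k)$ is strictly decreasing and $\rho_k$ is strictly increasing in $k$.

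Finally I apply the weighted Chebyshev sum inequality. For two similarly ordered sequences and positive weights,
\[
\mathrm{Cov}_w(d_k,\rho_k) = \frac{1}{2\bigl(\sum_k w_k\bigr)^2}\sum_{j,k} w_jw_k\,(d_j-d_k)(\rho_j-\rho_k) \ge 0,
\]
since each product $(d_j-d_k)(\rho_j-\rho_k)$ is non-negative. Equality holds only when $L=1$. Combined with Lemma~\ref{lem:dk-increasing}, this gives $\partial D/\partial\mu>0$, as needed for part (b).
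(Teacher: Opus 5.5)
Your proof is correct and follows the paper's route: you show that $d_k$ and $\rho_k$ are both increasing in $k$ and then apply the weighted Chebyshev sum inequality. The differences are cosmetic. You check that $k\mu^{k-1}/(1-\mu^k)$ decreases by writing its reciprocal as $(1-\mu)$ times the running arithmetic mean of $\mu^{0},\mu^{-1},\dots,\mu^{-(k-1)}$, whereas the paper cites the equivalent inequality $\sum_{j=1}^k \mu^j < k$ (equivalent because $k-(k+1)\mu+\mu^{k+1} = (1-\mu)\bigl(k-\sum_{j=1}^k\mu^j\bigr)$); and your explicit double-sum identity makes the Chebyshev step self-contained.
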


\begin{proof}
The coefficient $\rho_k = -k\mu^{k-1}/(1 - \mu^k)$ is strictly increasing in~$k$ (i.e.\ $|\rho_k|$ is decreasing), since $\sum_{j=1}^k \mu^j < k$. Both $d_k$ and $\rho_k$ are increasing in~$k$, so the \v{C}eby\v{s}\"ev sum inequality~\cite{chebyshev-inequality} gives $\mathrm{Cov}_w \ge 0$.
\end{proof}

Both terms are positive, hence $\partial D/\partial\mu > 0$.
\end{proof}

\subsection{Proof of Theorem~\ref{thm:monotonicity}(c): Convergence rate}

\begin{proof}
Existence and uniqueness of $\mu^*(L)$ follow from the intermediate value theorem: $D(L,0) = 1/2 < 1$ and $D(L,\mu) \to \infty$ as $\mu \to 1^-$, and the function is continuous and strictly increasing in~$\mu$ by part~(b). At $\mu = \mu^*(L)$: $D(L{+}1, \mu^*(L)) > 1$ by part~(a), so $\mu^*(L{+}1) < \mu^*(L)$ by part~(b). Convergence to~$1/3$ follows from $D(L,\mu) \to D_\infty(\mu)$ and $D_\infty(1/3) = 1$.

We now determine the rate of convergence. From Corollary~\ref{cor:weighted},
\[
D(L, \mu)
= D_\infty(\mu)\,\frac{S_2(L,\mu)}{S_1(L,\mu)},
\qquad
S_1 = \sum_{k=1}^L (1{-}\mu^k),
\quad
S_2 = \sum_{k=1}^L (1{-}\mu^k)^2.
\]
For fixed $\mu \in (0,1)$ and $L \to \infty$, the geometric tails converge, giving
\begin{align*}
S_1 &= L - \frac{\mu}{1{-}\mu} + O(\mu^L), \\
S_2 &= L - \frac{2\mu}{1{-}\mu} + \frac{\mu^2}{1{-}\mu^2} + O(\mu^L).
\end{align*}
Write $A = \mu/(1{-}\mu)$ and
$B = 2\mu/(1{-}\mu) - \mu^2/(1{-}\mu^2)$.  Then
\[
\frac{S_2}{S_1}
= \frac{L - B}{L - A}
= 1 - \frac{B{-}A}{L} + O(1/L^2),
\]
with
\[
B - A
= \frac{\mu}{1{-}\mu} - \frac{\mu^2}{1{-}\mu^2}
= \frac{\mu}{(1{-}\mu)(1{+}\mu)}.
\]
Hence
\begin{equation}\label{eq:D-expansion}
D(L, \mu) = D_\infty(\mu)\biggl[1 - \frac{\mu}{L(1{-}\mu)(1{+}\mu)} + O(1/L^2)\biggr].
\end{equation}

Now apply the implicit function theorem.  Set
$\Phi(L,\mu) = D(L,\mu) - 1$.  At the asymptotic Poisson
point $\mu_0 = 1/3$ we have $D_\infty(1/3) = 1$ and
\[
\frac{\partial D_\infty}{\partial\mu}\bigg|_{\mu=1/3}
= \frac{1}{(1{-}\mu)^2}\bigg|_{\mu=1/3}
= \frac{9}{4} > 0.
\]
Substituting $\mu^*(L) = 1/3 + \epsilon$ into~\eqref{eq:D-expansion}
and solving $D(L,\mu^*) = 1$ to first order:
\[
\biggl(1 + \frac{9}{4}\epsilon + O(\epsilon^2)\biggr)
\biggl(1 - \frac{3}{8L} + O(1/L^2)\biggr) = 1,
\]
where $\mu/((1{-}\mu)(1{+}\mu))\big|_{\mu=1/3} = 3/8$.
Expanding and solving for~$\epsilon$:
\[
\epsilon = \frac{3}{8L}\cdot\frac{4}{9} + O(1/L^2)
= \frac{1}{6L} + O(1/L^2). \qedhere
\]
\end{proof}

\begin{table}[ht]
\centering
\caption{Convergence of the finite Poisson transition point.}
\label{tab:convergence}
\begin{tabular}{ccccc}
\toprule
$L$ & $\mu^*(L)$ & $\mu^*(L) - 1/3$ & $L\cdot(\mu^*(L) - 1/3)$ & prediction~$1/6$ \\
\midrule
5 & 0.3792 & $4.58 \times 10^{-2}$ & 0.229 & 0.167 \\
10 & 0.3525 & $1.92 \times 10^{-2}$ & 0.192 & 0.167 \\
20 & 0.3422 & $8.90 \times 10^{-3}$ & 0.178 & 0.167 \\
50 & 0.3368 & $3.42 \times 10^{-3}$ & 0.171 & 0.167 \\
100 & 0.3350 & $1.69 \times 10^{-3}$ & 0.169 & 0.167 \\
\bottomrule
\end{tabular}
\end{table}

The residual discrepancy at finite~$L$ is due to the~$O(1/L^2)$ correction.

\begin{question}\label{q:poisson-fibonacci}
The coincidence $x_{\mathrm{Poisson}} = 3/2 = x_{\mathrm{Fibonacci}}$ follows algebraically from the coupling parameter (Theorem~\ref{thm:poisson-coupling}).  Does this identity admit a structural explanation---for instance, by identifying $D_\infty = 1$ as a phase boundary in a nearest-neighbour lattice model?
\end{question}

\section{Generalisation to larger state spaces}\label{sec:generalisation}

\subsection{Setup}

Let $S$ be a finite state space with $|S| = s \ge 2$ and $X$ a finite alphabet with $|X| = N$. Each $x \in X$ defines a transition $T_x\colon S \to S$. Fix a \emph{forbidden state} $s^* \in S$ and set $S' = S \setminus \{s^*\}$. A sequence is \emph{$s^*$-avoiding} if $\sigma_k \in S'$ for all $1 \le k \le L$.

For $|S| = 2$ with $s^* = 1$, pure state avoidance is stricter than the cascade-free condition. For $|S| > 2$ the state avoidance problem acquires a richer structure.

\subsection{The $(s{-}1)$-dimensional transfer matrix}

For $(i,j) \in S' \times S'$, define $n_{ij} = |\{x \in X : T_x(i) = j\}|$. The \emph{restricted transfer matrix} $\widetilde{T}$ has entries $\widetilde{T}_{ij} = n_{ij}$.

\begin{theorem}[State-avoiding count]\label{thm:general}
The number of $s^*$-avoiding sequences of length~$L$ from $\sigma_0 \in S'$ is
\[
a(L, \sigma_0) = e_{\sigma_0}^{\mathsf{T}} \widetilde{T}^L \mathbf{1}.
\]
The sequence satisfies a linear recurrence of order~$s{-}1$ with coefficients given by the elementary symmetric functions of the eigenvalues of~$\widetilde{T}$.
\end{theorem}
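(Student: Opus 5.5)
The plan is to prove the formula by induction on~$L$, and then get the recurrence from Cayley--Hamilton exactly as in the proof of Theorem~\ref{thm:main}.

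For the counting formula, I would set $c_L(i,j)$ to be the number of words $x_1\cdots x_L\in X^L$ that start from $\sigma_0=i$, have $\sigma_k\in S'$ for all $1\le k\le L$, and end in $\sigma_L=j$. I claim $c_L(i,j)=(\widetilde{T}^L)_{ij}$.

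The base case $L=0$ is clear: the empty word gives $c_0(i,j)=\delta_{ij}=I_{ij}$.

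For the inductive step, split an avoiding word of length $L+1$ as an avoiding prefix of length~$L$ ending in some $\ell\in S'$, followed by a final symbol $x$ with $T_x(\ell)=j\in S'$. The trajectory is determined deterministically by the word and~$\sigma_0$, so this decomposition is a bijection. The number of admissible final symbols is exactly $n_{\ell j}=\widetilde{T}_{\ell j}$. Hence
\[
c_{L+1}(i,j)=\sum_{\ell\in S'}c_L(i,\ell)\,\widetilde{T}_{\ell j}.
\]
Here it matters that only transitions with $j\neq s^*$ are counted. Symbols with $T_x(\ell)=s^*$ are simply absent from row~$\ell$ of~$\widetilde{T}$, and this is why $\widetilde{T}$ is in general substochastic (row sums at most~$N$) rather than having constant row sums. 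Summing $c_L(\sigma_0,j)$ over $j\in S'$ then gives $a(L,\sigma_0)=e_{\sigma_0}^{\mathsf{T}}\widetilde{T}^L\mathbf{1}$.

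For the recurrence, let $\chi(\lambda)=\lambda^{s-1}-e_1\lambda^{s-2}+e_2\lambda^{s-3}-\cdots+(-1)^{s-1}e_{s-1}$ be the characteristic polynomial of the $(s-1)\times(s-1)$ matrix~$\widetilde{T}$. By Vieta, the $e_i$ are the elementary symmetric functions of its eigenvalues, taken with algebraic multiplicity. Cayley--Hamilton gives $\chi(\widetilde{T})=0$. Multiplying on the left by $e_{\sigma_0}^{\mathsf{T}}\widetilde{T}^{L-s+1}$ and on the right by~$\mathbf{1}$ yields
\[
a(L)=e_1\,a(L-1)-e_2\,a(L-2)+\cdots+(-1)^{s}e_{s-1}\,a(L-s+1)
\]
for all $L\ge s-1$. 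The initial values $a(0),\dots,a(s-2)$ are read off directly from the formula.

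I do not expect a serious obstacle. The one point needing care is stating precisely what "order $s-1$" means. The recurrence has order at most $s-1$; the minimal polynomial could give a shorter one, but the statement only asserts the Cayley--Hamilton recurrence. The eigenvalues may be complex or repeated, but the $e_i$ are integers since $\widetilde{T}$ has integer entries, so the recurrence has integer coefficients.

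Finally, as a consistency check I would note the $|S|=2$ case. There $S'$ is a single state, $\widetilde{T}=(n_{00})=(k+t)$, and $a(L)=(k+t)^L$. This is the stricter pure-avoidance count mentioned before the theorem, not the cascade-free count of Theorem~\ref{thm:main}.
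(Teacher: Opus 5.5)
Your proposal is correct and follows the route the paper relies on. The paper states Theorem~\ref{thm:general} without a separate proof: it treats $a(L,\sigma_0)=e_{\sigma_0}^{\mathsf{T}}\widetilde{T}^L\mathbf{1}$ as the standard transfer-matrix path count and gets the recurrence from Cayley--Hamilton, as in Theorem~\ref{thm:main} and again in the proof of Theorem~\ref{thm:gen-univ}, so your induction on $L$ and the explicit recurrence $a(L)=\sum_{i=1}^{s-1}(-1)^{i+1}e_i\,a(L-i)$ for $L\ge s-1$ supply exactly the details the paper leaves implicit.
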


The generating function $F(z) = e_{\sigma_0}^{\mathsf{T}}(I - z\widetilde{T})^{-1}\mathbf{1}$ is rational with denominator degree~$s{-}1$.

\subsection{The case $|S| = 3$: Chebyshev structure persists}

For $|S| = 3$ the restricted matrix $\widetilde{T}$ is $2 \times 2$, and the recurrence is
\[
a(L) = \Tr(\widetilde{T})\,a(L{-}1) - \Det(\widetilde{T})\,a(L{-}2).
\]

\begin{theorem}[Chebyshev representation for $|S| = 3$]\label{thm:chebyshev3}
For $|S| = 3$ and $\Det(\widetilde{T}) > 0$,
\[
a(L) = (\sqrt{\Det(\widetilde{T})}\,)^L \cdot U_L(\tilde{x}),
\]
where $\tilde{x} = \Tr(\widetilde{T})/(2\sqrt{\Det(\widetilde{T})})$.
\end{theorem}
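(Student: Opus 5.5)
The plan is to repeat the rescaling argument from the proof of Theorem~\ref{thm:chebyshev}, with $N$ and $\mathfrak{d}$ replaced by $\tau := \Tr(\widetilde{T})$ and $\delta := \Det(\widetilde{T}) > 0$.

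\emph{Step 1: the recurrence.} By Theorem~\ref{thm:general}, $a(L) = e_{\sigma_0}^{\mathsf{T}}\widetilde{T}^L\mathbf{1}$ with $\widetilde{T}$ a $2\times 2$ matrix. Cayley--Hamilton gives $\widetilde{T}^2 = \tau\widetilde{T} - \delta I$. Multiplying on the left by $e_{\sigma_0}^{\mathsf{T}}\widetilde{T}^{L-2}$ and on the right by $\mathbf{1}$ yields $a(L) = \tau\,a(L{-}1) - \delta\,a(L{-}2)$ for all $L \ge 2$, which is the displayed recurrence.

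\emph{Step 2: rescaling.} Set $b(L) = a(L)/(\sqrt{\delta})^L$, which is legitimate since $\delta > 0$. Dividing the recurrence by $(\sqrt{\delta})^L$ gives $b(L) = 2\tilde{x}\,b(L{-}1) - b(L{-}2)$. This is exactly the Chebyshev recurrence~\eqref{eq:cheb-recurrence} at the point $\tilde{x}$. Unlike Theorem~\ref{thm:chebyshev}, no claim $\tilde{x}\ge 1$ is made or needed, since the identity is purely algebraic; indeed $\tilde{x}$ could be below $1$ here because $\widetilde{T}$ need not arise from a GEN/PROP/KILL split.

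\emph{Step 3: initial conditions (the main obstacle).} We have $b(0) = a(0) = 1 = U_0(\tilde{x})$. The condition $b(1) = U_1(\tilde{x}) = 2\tilde{x}$ is equivalent to $a(1) = \tau$. However, $a(1) = e_{\sigma_0}^{\mathsf{T}}\widetilde{T}\mathbf{1}$ is a row sum of $\widetilde{T}$, not its trace, so this does not hold automatically.

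I would handle this in two layers. First, for arbitrary initial data, the solution space of the Chebyshev recurrence is spanned by $U_L$ and $U_{L-1}$ (with $U_{-1}=0$). Hence in general
\[
b(L) = U_L(\tilde{x}) + \bigl(b(1) - 2\tilde{x}\bigr)\,U_{L-1}(\tilde{x}).
\]
Checking $L=0$ and $L=1$ suffices for this, because both sides satisfy the same recurrence.

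Second, the pure form $a(L) = (\sqrt{\delta})^L U_L(\tilde{x})$ holds precisely under the normalisation $a(1) = \Tr(\widetilde{T})$. This mirrors the cascade-free setting, where $a(1) = N = \Tr(T)$. Equivalently, it holds when the counting functional is chosen so that the first step contributes the trace, e.g.\ by using the paper's convention of an initial vector analogous to $v_0$ with $a(1)$ defined to equal $\tau$.

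I would state the theorem under this normalisation and record the displayed two-term formula as the general case. I would then verify the normalisation on a concrete $|S|=3$ example, to confirm that the setup intended in the paper indeed meets it.
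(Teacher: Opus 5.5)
Your Steps 1 and 2 are exactly the rescaling argument of Theorem~\ref{thm:chebyshev}. The paper gives no separate proof of this theorem and implicitly relies on that argument.

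Your Step 3 is the substantive point, and your diagnosis is correct: the argument needs $a(1)=\Tr(\widetilde{T})$. In the \GEN/\PROP/\KILL{} setting this holds only because the starting row R satisfies $T_{RG}=g=T_{GG}$. Hence the row sum $k+t+g$ coincides with the trace $(k+t)+g$. For a general $2\times 2$ restricted matrix, let $j$ be the state of $S'$ other than $\sigma_0$. Then the normalisation is the genuine extra condition $\widetilde{T}_{\sigma_0 j}=\widetilde{T}_{jj}$.

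It cannot be arranged ``by convention'' as you suggest. Once $a(L)=e_{\sigma_0}^{\mathsf{T}}\widetilde{T}^L\mathbf{1}$ is fixed, $a(1)$ is a row sum and nothing is left to choose. Your two-term formula $a(L)=\Det(\widetilde{T})^{L/2}U_L(\tilde{x})+\bigl(a(1)-\Tr(\widetilde{T})\bigr)\Det(\widetilde{T})^{(L-1)/2}U_{L-1}(\tilde{x})$ is the correct general statement.

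The concrete check you plan will not confirm the intended setup; it refutes the theorem as literally stated. Take the paper's own ternary example $\widetilde{T}=\begin{pmatrix}10&16\\4&19\end{pmatrix}$, with trace $29$ and determinant $126$.
\begin{itemize}
\item Starting from $\sigma_0=0$ gives $a(1)=26$ and $a(2)=628$.
\item Starting from $\sigma_0=1$ gives $a(1)=23$ and $a(2)=541$.
\item The claimed form $\Det(\widetilde{T})^{L/2}U_L(\tilde{x})$ instead gives $29$ and $29^2-126=715$.
\end{itemize}
Your formula reproduces $628$ and $541$. So the conditional version you propose is what can actually be proved, and the statement as written is false in general.

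One small slip: your aside that $\tilde{x}$ could lie below $1$ is wrong. Label the states of $S'$ as $0,1$. For a nonnegative matrix, $\Tr(\widetilde{T})^2-4\Det(\widetilde{T})=(\widetilde{T}_{00}-\widetilde{T}_{11})^2+4\widetilde{T}_{01}\widetilde{T}_{10}\ge 0$. Also $\Det(\widetilde{T})>0$ forces a positive diagonal. Hence $\tilde{x}\ge 1$ automatically, though this is harmless for the algebraic identity.
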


So the Chebyshev representation is not specific to the \GEN/\PROP/\KILL{} setup: it appears whenever the restricted transfer matrix is $2\times 2$.

\begin{remark}[Degenerate case: the sediment model on $\ZZ/p\ZZ$]\label{rem:sediment}
Not every state avoidance problem produces non-trivial recurrences. For $S = \ZZ/p\ZZ$, $X = (\ZZ/p\ZZ)^2$, $T_{(a,b)}(\sigma) = \sigma + a + b \bmod p$, the restricted matrix is $\widetilde{T} = p\,J_{p-1}$ (all-ones matrix scaled by~$p$), with dominant eigenvalue $p(p{-}1)$ and all others zero. The count reduces to the geometric sequence $a(L) = (p(p{-}1))^L$.
\end{remark}

\subsection{Ternary three-number addition}\label{sec:ternary}

We illustrate the theory with a concrete example. Adding three digits $a_k + b_k + c_k + \sigma_{k-1}$ in base $p = 3$ produces sums in $\{0,\ldots,8\}$ with carry state $\sigma \in \{0,1,2\}$. The forbidden state is $s^* = 2$ (no double carry).

To construct the restricted transfer matrix, we enumerate the transitions from each non-forbidden state. The alphabet is $X = \{0,1,2\}^3$ with $N = 27$.

\emph{From $\sigma = 0$:} the sum $a + b + c$ ranges over $\{0,\ldots,6\}$. It is $\le 2$ (carry~$0$) for 10 triples, $\in \{3,4,5\}$ (carry~$1$) for 16 triples, and $= 6$ (carry~$2$) for 1 triple. Since carry~$2$ leads to $s^* = 2$, we record $\widetilde{T}_{00} = 10$ and $\widetilde{T}_{01} = 16$.

\emph{From $\sigma = 1$:} the sum $a + b + c + 1$ ranges over $\{1,\ldots,7\}$. It is $\le 2$ (carry~$0$) for 4 triples, $\in \{3,4,5\}$ (carry~$1$) for 19 triples, and $\ge 6$ (carry~$2$) for 4 triples. Hence $\widetilde{T}_{10} = 4$ and $\widetilde{T}_{11} = 19$.

The restricted matrix is therefore
\[
\widetilde{T} = \begin{pmatrix} 10 & 16 \\ 4 & 19 \end{pmatrix},
\]
with $\Tr = 29$, $\Det = 126$, eigenvalues $(29 \pm \sqrt{337})/2$, and coupling parameter $\tilde{x} = 29/(2\sqrt{126}) \approx 1.292$.

The coupling parameter drops from $x = 3/2$ (two-number addition) to $\tilde{x} \approx 1.292$ (three-number addition), reflecting the stronger state coupling when more summands are involved.

\subsection{Binary four-number addition: beyond the Chebyshev threshold}\label{sec:binary4}

Adding four binary digits $a_k + b_k + c_k + d_k + \sigma_{k-1}$ with $\sigma \in \{0,1,2,3\}$, alphabet $X = \{0,1\}^4$ ($N = 16$), and forbidden state $s^* = 3$ (no triple carry). The carry at each position is $\lfloor(a_k + b_k + c_k + d_k + \sigma_{k-1})/2\rfloor$. Since $a + b + c + d + \sigma_{\mathrm{prev}} \le 4 + 2 = 6$, the carry reaches~$3$ only from state $\sigma = 2$ when all four digits equal~$1$.

We construct the $3 \times 3$ restricted matrix on $S' = \{0,1,2\}$ by counting, for each $(i,j) \in S' \times S'$, the number of digit quadruples $(a,b,c,d) \in \{0,1\}^4$ such that $\lfloor(a+b+c+d+i)/2\rfloor = j$.

\emph{From $\sigma = 0$:} the sum $a+b+c+d \in \{0,\ldots,4\}$ has carry $\lfloor s/2 \rfloor$. The counts are: $s = 0$ ($\binom{4}{0} = 1$ way, carry 0), $s = 1$ ($\binom{4}{1} = 4$, carry 0), $s = 2$ ($\binom{4}{2} = 6$, carry 1), $s = 3$ ($\binom{4}{3} = 4$, carry 1), $s = 4$ ($\binom{4}{4} = 1$, carry 2). Hence $\widetilde{T}_{00} = 5$, $\widetilde{T}_{01} = 10$, $\widetilde{T}_{02} = 1$.

\emph{From $\sigma = 1$:} the sum $a+b+c+d+1 \in \{1,\ldots,5\}$ with carry $\lfloor(s+1)/2\rfloor$. Counting: $s = 0$ (carry 0, 1 way), $s = 1$ (carry 1, 4 ways), $s = 2$ (carry 1, 6 ways), $s = 3$ (carry 2, 4 ways), $s = 4$ (carry 2, 1 way). Hence $\widetilde{T}_{10} = 1$, $\widetilde{T}_{11} = 10$, $\widetilde{T}_{12} = 5$.

\emph{From $\sigma = 2$:} the sum $a+b+c+d+2 \in \{2,\ldots,6\}$ with carry $\lfloor(s+2)/2\rfloor$. Counting: $s = 0$ (carry 1, 1 way), $s = 1$ (carry 1, 4 ways), $s = 2$ (carry 2, 6 ways), $s = 3$ (carry 2, 4 ways), $s = 4$ (carry 3, 1 way). Since carry~$3$ is forbidden, $\widetilde{T}_{20} = 0$, $\widetilde{T}_{21} = 5$, $\widetilde{T}_{22} = 10$.

The resulting restricted matrix is
\[
\widetilde{T} = \begin{pmatrix} 5 & 10 & 1 \\ 1 & 10 & 5 \\ 0 & 5 & 10 \end{pmatrix},
\]
with characteristic polynomial $\lambda^3 - 25\lambda^2 + 165\lambda - 280 = 0$, which is irreducible over~$\QQ$ (verified by the rational root test: the potential rational roots $\pm 1, \pm 2, \pm 4, \pm 5, \pm 7, \pm 8, \pm 10, \pm 14, \pm 20, \pm 28, \pm 35, \pm 40, \pm 56, \pm 70, \pm 140, \pm 280$ are all non-roots). The count satisfies the third-order recurrence
\[
a(L) = 25\,a(L{-}1) - 165\,a(L{-}2) + 280\,a(L{-}3),
\]
with initial values $a(0) = 1$, $a(1) = 16$, $a(2) = 255$. No Chebyshev representation exists, since $\dim(\widetilde{T}) = 3 > 2$. The universality criterion (Theorem~\ref{thm:gen-univ}) now requires matching three invariants---trace, sum of $2\times 2$ minors, and determinant---rather than two.

\subsection{Generalised universality}

\begin{theorem}[Generalised universality]\label{thm:gen-univ}
Two stateful operations with state space~$|S| = s$ produce identical $s^*$-avoiding sequences if and only if their restricted transfer matrices share the same characteristic polynomial.
\end{theorem}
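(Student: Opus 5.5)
The plan is to read ``identical $s^*$-avoiding sequences'' in the normalised sense the paper uses throughout. For $|S| = 2$ the cascade-free sequence is the canonical solution of its recurrence, with generating function $1/(1 - Nz + \mathfrak{d}z^2)$ from~\eqref{eq:gf}, which is exactly $1/\det(I - zT)$. For general $s$, Theorem~\ref{thm:general} attaches to the operation a recurrence whose coefficients are the elementary symmetric functions $e_1,\dots,e_{s-1}$ of the eigenvalues of~$\widetilde{T}$. I will take the $s^*$-avoiding sequence of the operation to be the canonical solution $(a(L))_{L\ge0}$ of that recurrence, namely the one with
\[
\sum_{L\ge0} a(L)\,z^L = \frac{1}{\det(I - z\widetilde{T})} = \frac{1}{1 - e_1 z + e_2 z^2 - \cdots + (-1)^{s-1}e_{s-1}z^{s-1}}.
\]
This reduces to the normalisation $a(0)=1$, $a(1)=\Tr$ of Theorems~\ref{thm:main} and~\ref{thm:chebyshev3} when $\widetilde{T}$ is $2\times 2$. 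The proof then has two directions, and both go through the reversed characteristic polynomial $\det(I - z\widetilde{T}) = z^{s-1}\chi_{\widetilde{T}}(1/z)$.

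\emph{``If''.} Suppose $\chi_{\widetilde{T}_1} = \chi_{\widetilde{T}_2}$. Then $\det(I - z\widetilde{T}_1) = \det(I - z\widetilde{T}_2)$ as polynomials in~$z$. So the two generating functions coincide, and hence all coefficients $a_1(L) = a_2(L)$ agree. Equivalently, Cayley--Hamilton gives both sequences the same order-$(s-1)$ recurrence. Expanding $1/\det(I - z\widetilde{T})$ to order $z^{s-2}$ shows that the first $s-1$ terms are also determined by $e_1,\dots,e_{s-1}$ alone. The recurrence then propagates equality to all~$L$.

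\emph{``Only if''.} Suppose the two sequences agree for all $L$. Then their generating functions are equal formal power series, so their reciprocals are equal too. The reciprocals are the polynomials $\det(I - z\widetilde{T}_i)$, each of degree at most $s-1$ in~$z$. Comparing coefficients of $z^j$ for $j = 1,\dots,s-1$ gives $e_j(\widetilde{T}_1) = e_j(\widetilde{T}_2)$. Since $\chi_{\widetilde{T}}(\lambda) = \lambda^{s-1} - e_1\lambda^{s-2} + \cdots + (-1)^{s-1}e_{s-1}$, the characteristic polynomials coincide. This recovers Corollary~\ref{cor:universality} when $s=2$, and for $s=4$ it recovers the three invariants listed in Section~\ref{sec:binary4}.

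The main obstacle is the identification step at the start: making sure the normalisation of the sequence is the canonical one $1/\det(I - z\widetilde{T})$. If instead one used the raw count $e_{\sigma_0}^{\mathsf{T}}\widetilde{T}^L\mathbf{1}$, whose numerator depends on~$\sigma_0$ and on the eigenvectors, the ``only if'' direction could fail through cancellation. I will handle this by stating explicitly that, as in the $2\times2$ theory, the sequence attached to an operation is the one fixed by the recurrence of Theorem~\ref{thm:general} together with the canonical initial data. Once this convention is fixed, both directions reduce to the one-line comparison of reciprocal polynomials above.
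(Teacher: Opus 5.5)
Your reciprocal-polynomial comparison is correct for the sequence you define. The identification step, however, is not a harmless choice of normalisation: it replaces the object the theorem is about, and this is a genuine gap. Theorem~\ref{thm:general} defines the $s^*$-avoiding count as $a(L,\sigma_0)=e_{\sigma_0}^{\mathsf T}\widetilde T^L\mathbf 1$, and the paper uses exactly these raw counts in Section~\ref{sec:binary4}. There $a(1)=16$ and $a(2)=255$ are row sums of $\widetilde T$ and $\widetilde T^2$, whereas your convention gives $e_1=25$ and $e_1^2-e_2=460$. In the ternary example of Section~\ref{sec:ternary}, the number of length-one words avoiding $s^*=2$ is $26$ from $\sigma_0=0$ and $23$ from $\sigma_0=1$, never $\Tr\widetilde T=29$. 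The agreement with $1/\det(I-zT)$ in the $|S|=2$ cascade-free setting is a coincidence of that particular matrix and start vector: $v_0^{\mathsf T}$ is the R-row of $T$, and its sum happens to equal $\Tr T=N$. That does not transfer to state avoidance.

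Your worry that only the ``only if'' direction is at risk is also too optimistic. For the actual counts both directions fail, whether you fix $\sigma_0$ or demand agreement from every start state.
\begin{itemize}
\item \emph{``If'' fails.} Take $s=3$, $s^*=2$ and a single letter $x$. The choices $T_x(0)=0,\ T_x(1)=2$ and $T_x(0)=2,\ T_x(1)=1$ give $\widetilde T=\operatorname{diag}(1,0)$ and $\operatorname{diag}(0,1)$. These share the characteristic polynomial $\lambda^2-\lambda$, but from $\sigma_0=0$ the counts are $1,1,1,\dots$ versus $1,0,0,\dots$.
\item \emph{``Only if'' fails.} Any operation that never sends $S'$ to $s^*$ has $a(L,\sigma_0)=N^L$ for every $\sigma_0$, whatever its restricted matrix is. For instance, $T_x\equiv 0$, $T_y\equiv 1$ versus $T_x=T_y=\mathrm{id}$ on $S'$ give $\begin{pmatrix}1&1\\1&1\end{pmatrix}$ and $2I$. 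Both count $2^L$, but their characteristic polynomials differ.
\end{itemize}
So the theorem as worded cannot be proved, and declaring a convention does not close the gap.

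What your argument does prove is a corrected statement, and you should present it as such rather than as a reading of this one. A natural combinatorial form uses the numbers $\Tr\widetilde T^L$ of closed $s^*$-avoiding trajectories. These count pairs $(\sigma_0,w)$ with $\sigma_0\in S'$ and $w\in X^L$ whose trajectory stays in $S'$ and returns to $\sigma_0$. They agree for all $L\ge1$ if and only if the characteristic polynomials agree. This follows from your comparison, since $\exp\bigl(\sum_{L\ge1}\Tr(\widetilde T^L)\,z^L/L\bigr)=1/\det(I-z\widetilde T)$.

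The paper's own proof has the same defect in hidden form.
\begin{itemize}
\item Its sufficiency step asserts that the initial values $a(j)$ depend only on the traces $\Tr\widetilde T^i$ via Newton's identities. That is true for your $h_j=[z^j]\,1/\det(I-z\widetilde T)$, but false for the row sums $e_{\sigma_0}^{\mathsf T}\widetilde T^j\mathbf 1$.
\item Its necessity step (``the first $s-1$ values determine the recurrence uniquely'') overlooks that a count can satisfy a lower-order recurrence. In the $2^L$ example, both $a(L)=2a(L-1)$ and $a(L)=4a(L-1)-4a(L-2)$ fit.
\end{itemize}
Under your normalisation both of the paper's steps become valid, and your generating-function comparison is the cleaner way to carry them out.
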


\begin{proof}
\emph{Sufficiency.} By Theorem~\ref{thm:general}, the $s^*$-avoiding count from initial state $\sigma_0$ is determined by $\widetilde{T}^L$, whose entries satisfy a linear recurrence with characteristic polynomial equal to that of~$\widetilde{T}$ (by the Cayley--Hamilton theorem). The recurrence coefficients are the elementary symmetric functions of the eigenvalues, which are determined by the characteristic polynomial.  It remains to check the initial conditions: $a(0) = 1$ in both cases, and for $1 \le j \le s{-}2$ the values $a(j)$ are determined by the first $j$ powers of~$\widetilde{T}$, whose traces depend only on the power sums of the eigenvalues, hence on the coefficients of the characteristic polynomial (via Newton's identities).

\emph{Necessity.} If the sequences agree for all~$L \ge 0$, the recurrence coefficients must coincide (since the first $s{-}1$ values determine the recurrence uniquely), and hence the characteristic polynomials are equal.
\end{proof}

For $|S| = 3$ this requires matching two invariants (trace and determinant); for $|S| = s$ it requires $s{-}1$ invariants. The Chebyshev representation persists precisely when $\dim(\widetilde{T}) = 2$, i.e.\ for $|S| \in \{2, 3\}$.

\subsection{Applicability of the \GEN/\PROP/\KILL{} decomposition}\label{sec:classification}

\begin{theorem}\label{thm:applicability}
The transfer matrix method with \GEN/\PROP/\KILL{} decomposition applies if and only if $|S| = 2$ and every transition $T_x$ is either constant or the identity.
\end{theorem}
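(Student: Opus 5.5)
The plan is to treat the two directions separately. The statement is partly definitional, so I first fix what "the \GEN/\PROP/\KILL{} decomposition applies" means. I take it to mean that every symbol $x\in X$ falls into exactly one of the three classes of Definition~\ref{def:gpk}. I also take it to mean that the cascade-free count is then produced by the $2\times 2$ matrix~\eqref{eq:transfer}, whose trace is $N$ and determinant is $tg$ by Lemma~\ref{obs:invariants}.

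\emph{Sufficiency.} Assume $|S|=2$ and every $T_x$ is constant or the identity. The constant map with value~$1$ is \GEN, the constant map with value~$0$ is \KILL, and the identity is \PROP. These three maps are pairwise distinct, so the classes partition $X$. Proposition~\ref{prop:equiv} then reduces cascade-freeness to the forbidden adjacency $\GEN\to\PROP$. The construction in Section~\ref{sec:transfer} gives $T$, and Theorem~\ref{thm:main} applies verbatim. This direction is only a bookkeeping check against the earlier results.

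\emph{Necessity.} I argue by contraposition in two cases. First, if $|S|=2$ but some $T_x$ is the negation (an ANTI symbol), that symbol lies in none of the three classes, so Definition~\ref{def:gpk} fails. The transfer matrix also fails structurally. After an ANTI symbol the successor state depends on the incoming state, and it is inverted rather than preserved. So the positional classes R and G no longer determine the state produced, and the entries of $T$ are not well defined as counts depending only on the previous symbol's class. I would exhibit this concretely with two length-$2$ prefixes ending in the same class but reaching different states. Second, if $|S|=s\ge 3$, the three classes \GEN/\PROP/\KILL{} cannot exhaust the maps $S\to S$. There are $s^s>3$ such maps, including non-constant non-identity maps such as the carry maps of Section~\ref{sec:ternary}. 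Moreover a binary label R/G cannot record $s$ states. The appropriate object is then the $(s-1)$-dimensional $\widetilde{T}$ of Theorem~\ref{thm:general}, not~\eqref{eq:transfer}.

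The main obstacle is making ``applies'' precise enough for necessity to be a theorem rather than a remark. The approach I would try first is to define applicability as the requirement that each $T_x$ be determined by a label in $\{\GEN,\PROP,\KILL\}$ with the semantics of Definition~\ref{def:gpk}. Under that definition, necessity reduces to the observation that the only maps with those semantics are the two constants and the identity on a two-element set. That observation also forces $|S|=2$, since \GEN{} and \KILL{} name the two states $1$ and $0$.
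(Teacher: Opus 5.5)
Your route is the same as the paper's. Sufficiency is the same bookkeeping. For necessity, the paper likewise notes that negation is excluded and that for $|S|>2$ ``there exist transition maps that are neither constant nor the identity''.

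The weak point is the $|S|\ge 3$ half, and the paper shares it. The existence of other maps in $S^S$ (your count $s^s>3$, the ternary carry maps) says nothing about the maps a \emph{given} operation actually uses. Under the formalisation you propose, necessity is in fact false. Take $S=\{0,1,2\}$ with every $T_x$ equal to $\sigma\mapsto 0$, $\sigma\mapsto 1$ or $\mathrm{id}$. Then:
\begin{itemize}
\item each symbol carries a label with exactly the semantics of Definition~\ref{def:gpk};
\item no trajectory from $\sigma_0=0$ ever visits $2$;
\item Proposition~\ref{prop:equiv} holds verbatim, and~\eqref{eq:transfer} counts the cascade-free sequences exactly;
\item yet $|S|=3$.
\end{itemize}
So your closing inference (``forces $|S|=2$, since \GEN{} and \KILL{} name the two states $1$ and $0$'') is a non sequitur. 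Naming two states does not exclude a third. Likewise, the remark that a binary label R/G cannot record $s$ states is moot when only two states are ever reached.

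To make necessity a theorem, you have to put the missing hypothesis into the definition. For example, require that every state of $S$ be reachable from $\sigma_0$. Under the \GEN/\PROP/\KILL{} semantics every $T_x$ maps $\{0,1\}$ into $\{0,1\}$, so the reachable set lies in $\{0,1\}$. Hence $|S|\le 2$, with $S=\{0,1\}$ as soon as $g\ge 1$, and the $s^s$ count becomes unnecessary.

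In the same spirit, the negation half of necessity is purely definitional and should be presented that way. With $r$ negation symbols, the state-indexed matrix $\begin{pmatrix} k+t & g+r \\ k+r & g\end{pmatrix}$ still counts cascade-free sequences, via $e_0^{\mathsf{T}}(\cdot)^L\mathbf{1}$, with trace $N-r$ and determinant $gt-r(g+k+r)$. What breaks is the three-class labelling and the invariants of Lemma~\ref{obs:invariants}, not the transfer-matrix method as such.
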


\begin{proof}
The \GEN/\PROP/\KILL{} classification exhausts all maps $\{0,1\} \to \{0,1\}$ that are either constant ($T_x \equiv 0$ or $T_x \equiv 1$) or the identity ($T_x = \mathrm{id}$); the only remaining map on $\{0,1\}$ is the negation $\sigma \mapsto 1 - \sigma$, which is excluded by assumption.  For $|S| = 2$ without negation, every symbol falls into exactly one of \GEN, \PROP, \KILL, so the decomposition is valid and the $2 \times 2$ transfer matrix of Section~\ref{sec:transfer} governs the count.  Conversely, if $|S| > 2$, there exist transition maps that are neither constant nor the identity (e.g.\ permutations of three or more states), and a three-class decomposition cannot capture the transition structure.
\end{proof}

So the cascade-free condition ($P^* = 0$) marks the boundary between stateless and genuinely stateful arithmetic; $\rho(L)$ and~$x$ tell us how close a particular operation is to that boundary.

\section*{Declaration of competing interests}

The author declares that there are no known competing financial interests or personal relationships that could have appeared to influence the work reported in this paper.

\section*{Funding}

This research did not receive any specific grant from funding agencies in the public, commercial, or not-for-profit sectors.

\section*{Data availability}

No data was used for the research described in this article.

\section*{Declaration of generative AI and AI-assisted technologies in the manuscript preparation process}

During the preparation of this work the author used Claude (Anthropic) in order to assist with structuring the manuscript, verifying computational examples, and improving the exposition. All mathematical content, proofs, and results were developed by the author. After using this tool, the author reviewed and edited all content and takes full responsibility for the content of the article.


\appendix
\setcounter{table}{0}
\renewcommand{\thetable}{A.\arabic{table}}
\section{Verification}\label{app:verification}

\subsection{Exhaustive verification}

\begin{table}[ht]
\centering
\caption{Exhaustive verification of the carry--doubling scaling law.}
\label{tab:verification}
\begin{tabular}{cccccc}
\toprule
$p$ & $L$ & $a_{\mathrm{carry}}(L)$ & $a_{\mathrm{dbl}}(L)$ & $p^L \cdot a_{\mathrm{dbl}}(L)$ & Match \\
\midrule
3 & 2 & 72 & 8 & 72 & $\checkmark$ \\
3 & 3 & 567 & 21 & 567 & $\checkmark$ \\
3 & 4 & 4\,455 & 55 & 4\,455 & $\checkmark$ \\
5 & 2 & 575 & 23 & 575 & $\checkmark$ \\
5 & 3 & 13\,125 & 105 & 13\,125 & $\checkmark$ \\
5 & 4 & 299\,375 & 479 & 299\,375 & $\checkmark$ \\
7 & 2 & 2\,254 & 46 & 2\,254 & $\checkmark$ \\
7 & 3 & 103\,243 & 301 & 103\,243 & $\checkmark$ \\
7 & 4 & 4\,727\,569 & 1\,969 & 4\,727\,569 & $\checkmark$ \\
11 & 2 & 14\,036 & 116 & 14\,036 & $\checkmark$ \\
11 & 3 & 1\,625\,151 & 1\,221 & 1\,625\,151 & $\checkmark$ \\
11 & 4 & 188\,151\,491 & 12\,851 & 188\,151\,491 & $\checkmark$ \\
13 & 2 & 27\,547 & 163 & 27\,547 & $\checkmark$ \\
13 & 3 & 4\,484\,077 & 2\,041 & 4\,484\,077 & $\checkmark$ \\
13 & 4 & 729\,876\,355 & 25\,555 & 729\,876\,355 & $\checkmark$ \\
\bottomrule
\end{tabular}
\end{table}

\subsection{The case $p = 2$}

At $p = 2$ the binary doubling has no \PROP{} digit ($2d$ is even for all $d \in \{0,1\}$, so $2d \neq p{-}1 = 1$). Algebraically, \PROP{} exists for doubling if and only if $p$ is odd. The coupling parameters diverge: $x_{\mathrm{carry}} = \sqrt{2}$ but $x_{\mathrm{dbl}} = \infty$ (since $\mathfrak{d}_{\mathrm{dbl}} = 0$).

\end{document}